\newtheorem{theorem}{Theorem}[section]
\newtheorem{lemma}[theorem]{Lemma}
\newtheorem{proposition}[theorem]{Proposition}
\newtheorem{corollary}[theorem]{Corollary}
\theoremstyle{definition}
\newtheorem{definition}[theorem]{Definition}
\newtheorem{remarks}[theorem]{Remarks}
\numberwithin{equation}{section}
\newcommand{\norma}[1]{\| #1 \|}
\newcommand {\N} {\mathbb{N}}
\newcommand{\sol}[1]{\text{sol}(#1)}
\newcommand{\conj}[2]{\left \{ {#1} \, : \, {#2} \right \}}
\begin{document}
	\setcounter{page}{1}
	
	\title{Gelfand-Phillips-type properties in Banach lattices}
	
	\author[Ardakani and Miranda]
	{Halimeh Ardakani$^{*}$ and VINÍCIUS C. C. MIRANDA$\dagger$}
	
	\address {$^{1}$ Department of Mathematics, Payame Noor University, P. O. Box 19395-3697, Tehran, Iran.}
	
	\address {$^{2}$ Centro de Matem\'atica, Computa\c c\~ao e Cogni\c c\~ao, Universidade Federal do ABC, 09210-580, Santo Andr\'e,
		Brazil.}
	\email{\textcolor[rgb]{0.00,0.00,0.84}{ardakani@pnu.ac.ir, halimeh\_ardakani@yahoo.com}}
	%\email{\textcolor[rgb]{0.00,0.00,0.84}{smodarres@yazd.ac.ir}}
	\email{\textcolor[rgb]{0.00,0.00,0.84}{colferaiv@gmail.com}}
	%\email{\textcolor[rgb]{0.00,0.00,0.84}{moshtagh@yazd.ac.ir}}

   % \date{
	%	\newline \indent $^{*}$ Corresponding author
       %  \newline \indent $^\dagger$Supported by Fapemig grant APQ-01853-23}
	
	\subjclass[2010]{Primary  46A40; Secondary 46B40, 46B42.}
	
	\keywords{$p$-limited, almost $p$-limited set, $p$-Gelfand-Phillips property,  $p$-limited completely continuous operator, discrete Banach lattice.
		\newline \indent $^{*}$ Corresponding author.
		\newline \indent $^\dagger$Supported by FAPESP grant 2023/12916-1 and Fapemig grant APQ-01853-23}
	\begin{abstract}

        We study $p$-limited and almost $p$-limited sets in Banach lattices and their connections with relatively $p$-compact and relatively compact sets. We investigate the weak and the strong 
        Gelfand-Phillips property of order $p$, as well as the $p$-GP property introduced by Delgado and Piñeiro, providing conditions under which these properties may coincide. Additionally, we prove that a Banach lattice $E$ is a KB-space if and only if every almost $p$-limited set in $E$ is 
     relatively weakly compact if and only if every the adjoint of a weakly compact taking values on $E$ is disjoint $p$-summing.%%,  and we study different classes of bounded linear operators related to almost $p$-limited sets, establishing their connections with some well-known properties in Banach lattices.}
	\end{abstract} \maketitle

\section{Introduction}

Recall that a subset $A$ of a Banach space $X$ is said to be \textit{limited} if every weak*-null sequence in the dual space $X^*$ converges uniformly to $0$ on $A.$ Or equivalently, if for every bounded linear operator $T:X\to c_0,$ $T(A)$ is a relatively compact set. Notions related to limitedness have been considered in the mathematical literature. For instance, in the context of summability properties, the so called $p$-\textsl{limited} ($1 \leq p < \infty$) sets were defined by Karn and Sinha \cite{karnsinha2} and further studied by Delgado and Pi\~neiro \cite{delpin2}, Galindo and Miranda \cite{coarse, galmirams}, and Ghenciu \cite{Ioana, p-lcc}.
We recall the definition of $p$-limited sets: 
a subset $A$ of a Banach space $X$ is a 
    $p$-limited set ($1 \leq p < \infty$) if for every weak* $p$-summable sequence $(x_n^*)_n \subset X^*$, there is $a = (a_n)_n \in \ell_p$ such that $|x_n^*(x)| \leq a_n$ for every $x \in A$ and $n \in \N$. Or equivalently, if for every bounded linear operator $T: X \to \ell_p$, $T(A)$ is an order bounded subset of $\ell_p$.

These two classes of sets play an important role in the study of certain properties of Banach spaces and in the theory of bounded linear operators. For example, the Gelfand-Phillips property (resp., Gelfand-Phillips property of order $p$) arises from comparing limited sets with relatively compact sets (resp., $p$-limited sets with relatively $p$-compact sets). In \cite{p-lcc}, I. Ghenciu introduced the weak Gelfand-Phillips property of order $p$ in a Banach space by comparing $p$-limited sets with relatively compact sets. The study of this property constitutes the main subject of Section \ref{Secweakpgp} of the present manuscript. It is worth mentioning that while every Banach space with the Gelfand-Phillips property of order $p$ necessarily has the weak Gelfand-Phillips property of order $p$, 
it remains an open problem whether there exists a Banach space with the weak Gelfand-Phillips property of order $p$ that fails to have the  Gelfand-Phillips property of order $p$. In Section \ref{Secweakpgp}, we provide sufficient conditions in the setting of Banach lattices under which these two properties are equivalent. For instance, we prove that if a Banach lattice solves this problem, then it must be a non-discrete KB-space that is neither reflexive nor an AL-space. Along the way, we obtain several interesting results, such as Theorem \ref{p-limited} which states that every $p$-limited subset of a discrete KB-space must be relatively $p$-compact, and Corollary \ref{fShj}, which provides a characterization of when the dual of a Banach space has the weak Gelfand-Phillips property of order $p$ in terms of the original space not containing a copy of $\ell_1$. The definitions and examples related to the Gelfand-Phillips properties of order $p$ will be presented at the beginning of Section \ref{Secweakpgp}.

In Section \ref{secstrong}, we introduce the
strong Gelfand Phillips property of order $p$ in a Banach lattice by considering the almost $p$-limited sets introduced in \cite{galmirams}, which we recall the definition: a subset $A$ of a Banach lattice $E$ is \textit{almost $p$-limited} (resp. \textit{positive $p$-limited}) if for each disjoint (resp. positive)  weak$^*$ $p$-summable sequence $(x_n^*)_n \subset E^*$, there exists $(a_j)_j \in \ell_p$ such that $|x_n^*(x)| \leq a_n$ for every $x \in A$ and $n \in \N$. By a \textit{disjoint operator} $T: E \to \ell_p$, we mean that the weak$^*$ $p$-summable sequence $(x_n^*)_n \subset E^*$ that defines $T$, that is $T(x) = (x_n^*(x))_n$ for every $x \in E$, is disjoint. It was noted in \cite{galmirams} that, a subset $A \subset E$ is almost $p$-limited (resp. positive $p$-limited) if and only if $T(A)$ is an order bounded subset of $\ell_p$ for every disjoint (resp. positive) operator $T: E \to \ell_p$. We then give characterizations and examples for this new property. Moreover, we also 
provide conditions under which all Gelfand-Phillips properties of order $p$ mentioned above coincide. In particular, the following chain of implications concerning the Gelfand-Phillips type properties hold for all $p \geq 2$:

\begin{center}

\tikzset{every picture/.style={line width=0.75pt}} %set default line width to 0.75pt        

\begin{tikzpicture}[x=0.75pt,y=0.75pt,yscale=-1,xscale=1]
	%uncomment if require: \path (0,300); %set diagram left start at 0, and has height of 300
	
	%Straight Lines [id:da32137108575733775] 
	\draw  (210,160) -- (284,158.64);
	\draw [shift={(286,158.6)}, rotate = 178.94] [color={rgb, 255:red, 0; green, 0; blue, 0 }   ] [line width=0.75]    (10.93,-3.29) .. controls (6.95,-1.4) and (3.31,-0.3) .. (0,0) .. controls (3.31,0.3) and (6.95,1.4) .. (10.93,3.29)   ;
	%Straight Lines [id:da11407998194695645] 
	\draw  (140,171.6) -- (141.93,223.6) ;
	\draw [shift={(142,225.6)}, rotate = 267.88] [color={rgb, 255:red, 0; green, 0; blue, 0 }  ,draw opacity=1 ][line width=0.75]    (10.93,-3.29) .. controls (6.95,-1.4) and (3.31,-0.3) .. (0,0) .. controls (3.31,0.3) and (6.95,1.4) .. (10.93,3.29)   ;
	%Straight Lines [id:da17572290583711303] 
	\draw    (306,168) -- (307.93,221.6) ;
	\draw [shift={(308,223.6)}, rotate = 267.94] [color={rgb, 255:red, 0; green, 0; blue, 0 }  ,draw opacity=1 ][line width=0.75]    (10.93,-3.29) .. controls (6.95,-1.4) and (3.31,-0.3) .. (0,0) .. controls (3.31,0.3) and (6.95,1.4) .. (10.93,3.29)   ;
	%Straight Lines [id:da883530768352331] 
	\draw   (283,150.6) -- (215,151.57) ;
	\draw [shift={(213,151.6)}, rotate = 359.18] [color={rgb, 255:red, 0; green, 0; blue, 0 }  ,draw opacity=1 ][line width=0.75]    (10.93,-3.29) .. controls (6.95,-1.4) and (3.31,-0.3) .. (0,0) .. controls (3.31,0.3) and (6.95,1.4) .. (10.93,3.29)   ;
	%Straight Lines [id:da4724371652659054] 
	\draw   (243,147.6) -- (254,155.1) ;
	%Straight Lines [id:da3014757019473562] 
	\draw    (133,226.6) -- (131.08,177) ;
	\draw [shift={(131,175)}, rotate = 87.78] [color={rgb, 255:red, 0; green, 0; blue, 0 }  ,draw opacity=1 ][line width=0.75]    (10.93,-3.29) .. controls (6.95,-1.4) and (3.31,-0.3) .. (0,0) .. controls (3.31,0.3) and (6.95,1.4) .. (10.93,3.29)   ;
	%Straight Lines [id:da2050611826621862] 
	\draw   (125,203.6) -- (137.75,198.45) ;
	%Straight Lines [id:da6530505819741672] 
	\draw    (297,224.6) -- (296.04,169.6) ;
	\draw [shift={(296,167.6)}, rotate = 88.99] [color={rgb, 255:red, 0; green, 0; blue, 0 }  ,draw opacity=1 ][line width=0.75]    (10.93,-3.29) .. controls (6.95,-1.4) and (3.31,-0.3) .. (0,0) .. controls (3.31,0.3) and (6.95,1.4) .. (10.93,3.29)   ;
	%Straight Lines [id:da7940992066706603] 
	\draw    (301,183) -- (311,191.6) ;
	%Straight Lines [id:da6466892739892007] 
	\draw   (183,231) -- (271,229.63) ;
	\draw [shift={(273,229.6)}, rotate = 179.11] [color={rgb, 255:red, 0; green, 0; blue, 0}  ,draw opacity=1 ][line width=0.75]    (10.93,-3.29) .. controls (6.95,-1.4) and (3.31,-0.3) .. (0,0) .. controls (3.31,0.3) and (6.95,1.4) .. (10.93,3.29)   ;
	%Straight Lines [id:da2953921730325806] 
	%%\draw [color={rgb, 255:red, 208; green, 2; blue, 27 }  ,draw opacity=1 ]   (267,239) -- (180,239.59) ;
	%%\draw [shift={(178,239.6)}, rotate = 359.61] [color={rgb, 255:red, 208; green, 2; blue, 27 }  ,draw opacity=1 ][line width=0.75]    (10.93,-3.29) .. controls (6.95,-1.4) and (3.31,-0.3) .. (0,0) .. controls (3.31,0.3) and (6.95,1.4) .. (10.93,3.29)   ;
	%Straight Lines [id:da6169995543715434] 
	%%\draw [color={rgb, 255:red, 208; green, 2; blue, 27 }  ,draw opacity=1 ]   (222,233) -- (235,244.6) ;
	%Straight Lines [id:da4544109322514147] 
	\draw  (183,171) -- (277.24,221.65) ;
	\draw [shift={(279,222.6)}, rotate = 208.26] [color={rgb, 255:red, 0; green, 0; blue, 0 }  ,draw opacity=1 ][line width=0.75]    (10.93,-3.29) .. controls (6.95,-1.4) and (3.31,-0.3) .. (0,0) .. controls (3.31,0.3) and (6.95,1.4) .. (10.93,3.29)   ;
	%Straight Lines [id:da6246903202205263] 
	\draw   (276,212.6) -- (198.76,170.95) ;
	\draw [shift={(197,170)}, rotate = 28.34] [color={rgb, 255:red, 0; green, 0; blue, 0}  ,draw opacity=1 ][line width=0.75]    (10.93,-3.29) .. controls (6.95,-1.4) and (3.31,-0.3) .. (0,0) .. controls (3.31,0.3) and (6.95,1.4) .. (10.93,3.29)   ;
	%Straight Lines [id:da6624019733956483] 
	\draw   (214,182) -- (223,178.6) ;
	%Straight Lines [id:da617432172014869] 
	\draw    (148.5,216.5) -- (277.66,162.27) ;
	\draw [shift={(279.5,161.5)}, rotate = 157.23] [color={rgb, 255:red, 0; green, 0; blue, 0}  ,draw opacity=1 ][line width=0.75]    (10.93,-3.29) .. controls (6.95,-1.4) and (3.31,-0.3) .. (0,0) .. controls (3.31,0.3) and (6.95,1.4) .. (10.93,3.29)   ;
	%Straight Lines [id:da19468952504938075] 
	\draw    (285.5,167.9) -- (156.84,222.81) ;
	\draw [shift={(155,223.6)}, rotate = 336.89] [color={rgb, 255:red, 0; green, 0; blue, 0 }  ,draw opacity=1 ][line width=0.75]    (10.93,-3.29) .. controls (6.95,-1.4) and (3.31,-0.3) .. (0,0) .. controls (3.31,0.3) and (6.95,1.4) .. (10.93,3.29)   ;
	%Straight Lines [id:da6767394059260416] 
	\draw   (205,198) -- (211,208.6) ;
	
	% Text Node
	\draw (122,153) node [anchor=north west][inner sep=0.75pt]   [align=left] {strong $p$-GP};
	% Text Node
	\draw (292,151) node [anchor=north west][inner sep=0.75pt]   [align=left] {GP};
	% Text Node
	\draw (130,229.6) node [anchor=north west][inner sep=0.75pt]   [align=left] {$p$-GP};
	% Text Node
	\draw (233,128) node [anchor=north west][inner sep=0.75pt]  [color={rgb, 255:red, 0; green, 0; blue, 0 }  ,opacity=1 ] [align=left] {$ L_{2}$};
	% Text Node
	\draw (312,189) node [anchor=north west][inner sep=0.75pt]  [color={rgb, 255:red, 0; green, 0; blue, 0 }  ,opacity=1 ] [align=left] {$ c_{0}$};
	% Text Node
	\draw (107,193) node [anchor=north west][inner sep=0.75pt]  [color={rgb, 255:red, 0; green, 0; blue, 0 }  ,opacity=1 ] [align=left] {$L_{2}$};
	% Text Node
	\draw (281,225.6) node [anchor=north west][inner sep=0.75pt]   [align=left] {weak $p$-GP};
	% Text Node
	%%\draw (217,237) node [anchor=north west][inner sep=0.75pt]  [color={rgb, 255:red, 189; green, 16; blue, 224 }  ,opacity=1 ] [align=left] {$?$};
	% Text Node
	\draw (217,162) node [anchor=north west][inner sep=0.75pt]  [color={rgb, 255:red, 0; green, 0; blue, 0}  ,opacity=1 ] [align=left] {$\displaystyle L_{2}$};
	% Text Node
	\draw (210,202) node [anchor=north west][inner sep=0.75pt]  [color={rgb, 255:red, 0; green, 0; blue, 0 }  ,opacity=1 ] [align=left] {$\displaystyle c_{0}$};
\end{tikzpicture}
\end{center}
In the above diagram, the cut arrows mean that the implication does not hold and the counterexample is the set within the arrow. Also, $L_2$ means $L_2([0,1])$.

Another interesting question we address is the characterization of a Banach lattice in which every almost $p$-limited set is relatively weakly compact.  This question arises naturally from the fact that every $p$-limited set is relatively weakly compact (see \cite[Proposition 2.1]{delpin2}). In Theorem \ref{r2.3u}, we prove that a Banach lattice $E$ is a KB-space if and only if every almost $p$-limited (resp. positive $p$-limited) set in $E$ is relatively weakly compact. Finally, we prove, as an application of Theorem \ref{r2.3u}, that the adjoint of a weakly compact taking values on a Banach lattice operator is disjoint $p$-summing if and only if $E$ is a KB-space. A dual result is also obtained.

%%Finally, we study different classes of bounded linear operators related to almost $p$-limited sets.  For instance, we introduce the almost $p$-limited completely continuous operators, and we provide a characterization of Banach lattices with the strong Gelfand Phillips property of order $p$ in terms of this class of operators. Results concerning the relations between almost $p$-limited sets and the so-called disjoint $p$-summing operators introduced in \cite{chenbelacel} are given.

We refer the reader to \cite{Positive, Meyer} for background on Banach lattices and to \cite{albiac, fabian} for Banach space theory. Throughout this paper, $X, Y$ will denote Banach spaces and $E, F$ will denote Banach lattices. For a Banach space $X$, $B_X$ denotes its closed unit ball and $X^\ast$ denotes its topological dual. For a subset $A \subset E$, we denote by $\sol{A}$ its solid hull.   Throughout this article, we assume that $1 \le p <\infty$, unless otherwise stated. 
Now, we recall some useful definitions that will be needed in this paper.

\noindent (1) A subset $A$ of a Banach space $X$ is said to be \textit{Dunford-Pettis} if every weakly null sequence in $X^\ast$ converges uniformly to zero on $A$	 (see \cite[p. 350]{Positive}).

\noindent (2) A Banach space $X$ has the:
	 	 
         $\bullet$ \textit{Gelfand-Phillips (GP) property} if each limited set in $X$ is relatively compact (see \cite[Exercise 5.86]{fabian}).
         
	 	 	$\bullet$  \textit{Schur property} if each relatively weakly compact set in $X$ is relatively  compact (see \cite[Definition 2.3.4]{albiac}).
            
	 	 	$\bullet$ \textit{DP$_{rc}$P} if each Dunford-Pettis set in $X$ is relatively compact (see \cite{Emman DP}).

\noindent (3) A subset $A$ of a Banach lattice is said to be:

$\bullet$ \textit{almost Dunford-Pettis} (resp. \textit{almost limited}) if every disjoint weakly null sequence (resp. disjoint weak* null sequence) in $E^*$ converges uniformly to zero on $A$ (see  \cite{Almost DP set, Chen}). 

%%$\bullet$ \textit{L-weakly compact} if every disjoint sequence contained in $\sol{A}$ is norm null (see \cite[Definition 3.6.1]{Meyer}).

\noindent (4) A Banach lattice $E$ has the: 
	
     $\bullet$ \textit{Strong GP} if each almost limited set in $E$ is relatively compact (see \cite{Strong GP}).
		
		$\bullet$  \textit{Strong DP$_{rc}$P} if each almost Dunford-Pettis set in $E$ is relatively compact (see \cite{Strong DPrcP}).

\noindent (5)  An operator $T:X\rightarrow Y$ is called 

 $\bullet$ {\it completely continuous} if it takes weakly null sequences to norm null sequences (see \cite[p. 340]{Positive}). 

    $\bullet$  {\it $p$-summing} if it takes weakly $p$-summable sequences to $p$-summable sequences (see \cite{diestel}).
    
  %%  $\bullet$ {\it limited completely continuous} (lcc, in short) if for every limited weakly null sequence $(x_n)_n \subset X$, $\|Tx_n\|\rightarrow 0$ (see \cite{salimi}). \textcolor{red}{I think we de not use this notion in the paper anymore}

%%	$\bullet$ {\it Dunford-Pettis completely continuous} (DPcc, in short) if for every weakly null and Dunford-Pettis sequence $(x_n)_n \subset X$, $\|Tx_n\|\rightarrow 0$ (see \cite{Wen}). \textcolor{red}{I think we de not use this notion in the paper anymore}

$\bullet$  {\it $p$-limited completely continuous} ($p$-lcc, in short) if for every $p$-limited weakly null sequence $(x_n)_n \subset X$, $\|Tx_n\|\rightarrow 0$ (see \cite{p-lcc}).

\noindent  (6) An operator $T:E\rightarrow X$ is called 

			$\bullet$ {\it order weakly compact} if it maps order intervals to relatively weakly compact ones (see \cite[p. 318]{Positive}).
            
		%%	$\bullet$  {\it almost Dunford-Pettis completely continuous (DP$^acc$)}  if for every weakly null and almost Dunford-Pettis sequence $(x_n)_n \subset E$, $\|Tx_n\|\rightarrow 0$ (see \cite{Strong DPrcP}).

        %%    $\bullet$ \textit{$M$-weakly compact} whenever $\|Tx_n\|\to 0$ holds for every  disjoint sequence $(x_n)\subset B_E$ (see \cite[p. 213]{Meyer}).

       %% $\bullet$ \textit{LW-compact} if $T(A)$ is relatively compact for every $L$-weakly compact set $A\subset E$ (see \cite{LW}).

      %%  $\bullet$ \textit{$AM$-compact} if $T$ takes order bounded subsets of $E$ to relatively compact sets (see \cite[p. 218]{Meyer}). 

%%%        $\bullet$ \textit{positive $p$-summing} (resp. \textit{disjoint $p$-summing}) if there exists a constant $C > 0$ such that for all  positive elements (resp. pairwise disjoint elements) $x_1, \dots, x_k \in E$, we have$$ \left ( \sum_{i=1}^k \norma{Tx_i}^p \right )^{1/p} \leq C \sup \conj{\left ( \sum_{i=1}^\infty |x^*(x_i)|^p \right )^{1/p}}{ x^* \in B_{E^*} }. $$ We refer to \cite{chenbelacel}  for the definitions.
            
	\section{The weak p-GP property} \label{Secweakpgp}

We recall from \cite{p-compact} that a bounded set $A \subset X$ is said to be \textit{relatively $p$-compact} if there
exists a norm $p$-summable sequence $(x_n)_n$ ($(x_n)_n \in c_0(X)$ if $p =\infty $) such that $$A \subseteq \left\{\sum\limits_{j=1}^\infty \lambda_j x_j : (\lambda_j)_j \in B_{\ell_{p^*}}\right\} = : p\mbox{-conv}\{(x_j)_j\}.$$
A Banach space $X$ has the \textit{GP property of order $p$ ($p$-GP property)} if each $p$-limited set in $X$ is relatively $p$-compact \cite{p-compact,delpin2}.
%A Banach space $X$ is said to have the \textit{Gelfand-Phillips property of order $p$} ($p$-GP property) if every $p$-limited subset of $X$ is relatively $p$-compact.
 Although this terminology was introduced in \cite{delpin2} by J. M. Delgando and C. Piñeiro, this notion was first addressed, with a different name, by the same authors in \cite{p-compact}. 
While reflexive spaces and separable dual spaces have the $p$-GP property, the Banach spaces $c_0$, $\ell_\infty$ and $L_1([0,1])$ fail to have the $p$-GP property (see \cite[Proposition ]{delpin2}).
Considering relatively compact sets instead of relatively $p$-compact ones, I. Ghenciu introduced and studied the $p$-limited relatively compact property (see \cite{p-lcc}). Since this is a weaker property than the $p$-GP introduced by Delgado and Piñeiro, we shall call it the weak $p$-GP property, that is a Banach space $X$ has the weak $p$-GP property if every $p$-limited subset of $X$ is relatively compact.
Since relatively $p$-compact sets are always relatively compact, the $p$-GP property implies the weak $p$-GP property, and it is still unknown if the weak $p$-GP property implies the $p$-GP property. 
%\textcolor{blue}{Do we have an example of a Banach lattice with the weak $p$-GP, failing the $p$-GP?}
  We divide this section in two parts. In the first subsection, we focus on studying the weak $p$-GP in Banach spaces, while in the second part we change our focus to the Banach lattice setting. In particular, we will provide conditions so that the weak $p$-GP and the $p$-GP property coincide in a Banach lattice.

\subsection{The weak p-GP property in Banach spaces}

We begin this subsection by proving a characterization of Banach spaces with the weak $p$-GP property in terms of $p$-lcc operators.
%%\par Following \cite{p-lcc},  a bounded linear operator $T:X\to Y$ is said to be $p$-limited completely continuous ($p$-lcc) if  $\|Tx_n\|\rightarrow 0$  for every $p$-limited weakly null sequence $(x_n)_n\subset X$. \textcolor{red}{The weak $p$-GP property in Banach spaces can be characterized by this class of operators as follows:}

 \begin{proposition}\label{hSj} For a Banach space $X$, the following are equivalent:  \\
	{\rm (a)} $X$ has the weak $p$-GP property. \\
		{\rm (b)} For each Banach space $Y$, $\mathcal{L}^p_{lcc}(X,Y)=\mathcal{L}(X,Y)$. \\
		{\rm (c)} $\mathcal{L}^p_{lcc}(X,\ell_{\infty})=\mathcal{L}(X,\ell_{\infty})$.
\end{proposition}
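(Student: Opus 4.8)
The plan is to establish the cyclic chain $(a) \Rightarrow (b) \Rightarrow (c) \Rightarrow (a)$. The implication $(b) \Rightarrow (c)$ is immediate by specializing $Y = \ell_\infty$, so the real content sits in the remaining two implications.

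For $(a) \Rightarrow (b)$, I would fix an arbitrary Banach space $Y$, an operator $T \in \mathcal{L}(X,Y)$, and a $p$-limited weakly null sequence $(x_n)_n \subset X$. Its range $\{x_n : n \in \N\}$ is a $p$-limited set, hence relatively compact by (a). A relatively compact weakly null sequence is automatically norm null: every subsequence has a norm-convergent sub-subsequence whose limit, being simultaneously a weak limit, must be $0$. Thus $\|x_n\| \to 0$, and continuity of $T$ yields $\|T x_n\| \leq \|T\|\,\|x_n\| \to 0$. Hence $T$ is $p$-lcc, giving $\mathcal{L}^p_{lcc}(X,Y) = \mathcal{L}(X,Y)$ for every $Y$.

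For the main implication $(c) \Rightarrow (a)$, I would argue by contraposition. Assume $X$ fails the weak $p$-GP property, so there is a $p$-limited set $A \subset X$ that is not relatively compact; since $A$ is then not totally bounded, I can choose $\delta > 0$ and a $\delta$-separated sequence $(x_n)_n \subset A$. Because every $p$-limited set is relatively weakly compact by \cite[Proposition 2.1]{delpin2}, after passing to a subsequence I may assume $x_n \rightharpoonup x$ for some $x \in X$. Setting $y_n := x_n - x$, the sequence $(y_n)_n$ is weakly null, still $\delta$-separated, and remains $p$-limited: for any weak* $p$-summable $(x_m^*)_m$ there is $(a_m)_m \in \ell_p$ dominating $A$, whence $|x_m^*(y_n)| \leq a_m + |x_m^*(x)|$ with $(|x_m^*(x)|)_m \in \ell_p$. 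The $\delta$-separation forces $\|y_n\| \geq \delta/2$ for all but at most one $n$, which I discard.

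It then remains to manufacture a single operator detecting that $(y_n)_n$ is not mapped to a norm-null sequence. Using Hahn--Banach, I would pick $x_n^* \in B_{X^*}$ with $x_n^*(y_n) = \|y_n\|$ and define $T : X \to \ell_\infty$ by $T(x) = (x_n^*(x))_n$, which is well defined and bounded since $\|T x\|_\infty \leq \|x\|$. Then $\|T y_n\|_\infty \geq |x_n^*(y_n)| = \|y_n\| \geq \delta/2 \not\to 0$, so $T \in \mathcal{L}(X,\ell_\infty)$ is not $p$-lcc, contradicting (c). I expect the main obstacle to be precisely this final step, namely producing one operator into $\ell_\infty$ that simultaneously \emph{sees} every $y_n$; it is resolved by the diagonal Hahn--Banach construction, while the supporting technical facts—that differences and translates of $p$-limited sets stay $p$-limited, and that relative weak compactness reduces us to a weakly null sequence—are the routine ingredients feeding into it.
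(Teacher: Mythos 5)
Your proof is correct and follows essentially the same route as the paper: the decisive step in both is the construction, via Hahn--Banach, of a single operator $T:X\to\ell_\infty$, $Tx=(x_n^*(x))_n$, whose coordinates witness the norms of a $p$-limited weakly null sequence that is bounded away from zero. The only difference is that the paper cites Ghenciu for the existence of such a (normalized) sequence when the weak $p$-GP fails, whereas you derive it from scratch using the relative weak compactness of $p$-limited sets, a $\delta$-separated subsequence, and stability of $p$-limitedness under translation --- all of which you carry out correctly.
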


\begin{proof} (a)$\Rightarrow$(b) Follows from \cite[p. 178, item (D)]{p-lcc}.

(b) $\Rightarrow$(c) Immediate.

	 $(c)\Rightarrow (a)$ If $X$ does not have the weak $p$-GP property, then there is a $p$-limited weakly null sequence
	$(x_n)_n\subset X$ with $\|x_n\| =1$ for all $n \in \N$ (see \cite[p. 178, item (D)]{p-lcc}).
	Choose a normalized sequence $(x_n^*)_n\subset X^*$ such that $x_n^*(x_n) = 1$ for every $n \in \N$, and define $T:X\rightarrow \ell_{\infty} $ by
	$Tx=(x_n^*(x))_n$ for every $x\in X$. Since $\displaystyle \|Tx_n\|_\infty = \sup_{k \in \N} |x_k^*(x_n)| \ge 1$ for every $n \in \N$, we get that $T$ is not $p$-lcc. 
\end{proof}

%%%\par We know that limitedness could also be defined by theuniform convergence of sequences of pointwise converging operators. For $p$-limited sets, we can also prove the following theorem which improves some results of \cite{p-lcc}. \textcolor{blue}{Which result from the paper of Ghenciu is improved?}

We will need the following result which is a vector-valued version of \cite[Proposition 1.2]{galmirams}.

\begin{theorem}\label{limitedness}
For a norm bounded subset $A \subset X$, the following conditions are equivalent:\\
{\rm (a)} $A$ is $p$-limited. \\
{\rm (b)} For every Banach space $Y$ and every $(T_n)_n\subset \mathcal{L}(X, Y)$ such that $\displaystyle \sum_{n=1}^{\infty}\|T_nx\|^p<\infty$ for every $x\in X$, it follows $\displaystyle \sum_{n=1}^{\infty}\left (\sup_{x\in A}\|T_nx\|\right )^p<\infty$.	\\
{\rm (c)} For every Banach space $Y$, every $(x_n)_n \subset A$ and every $(T_n)_n \subset \mathcal{L}(X, Y)$ such that $\displaystyle \sum_{n=1}^{\infty}\|T_nx\|^p<\infty$ for every $x\in X$, it follows $\displaystyle \sum_{n=1}^{\infty} \|T_nx_n\|^p<\infty$.
\end{theorem}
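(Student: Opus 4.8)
The plan is to establish the cycle (a) $\Rightarrow$ (b) $\Rightarrow$ (c) $\Rightarrow$ (a). The heart of the matter is the first implication, where one must upgrade the scalar-valued description of $p$-limitedness (the case $Y$ equal to the scalar field, which is literally the definition) to arbitrary vector-valued operators; the remaining two implications are comparatively soft.

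For (a) $\Rightarrow$ (b), suppose $A$ is $p$-limited and let $(T_n)_n \subset \mathcal{L}(X,Y)$ satisfy $\sum_n \|T_n x\|^p < \infty$ for every $x \in X$. I would set $c_n := \sup_{x \in A}\|T_n x\|$, which is finite since $A$ is bounded and each $T_n$ is continuous. Using $\|T_n x\| = \sup_{y^* \in B_{Y^*}} |y^*(T_n x)|$ together with Hahn--Banach, select for each $n$ a point $x_n \in A$ and a functional $y_n^* \in B_{Y^*}$ with $|y_n^*(T_n x_n)| \ge \tfrac{1}{2}c_n$, and put $z_n^* := T_n^* y_n^* \in X^*$. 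Since $|z_n^*(x)| = |y_n^*(T_n x)| \le \|T_n x\|$ for all $x$, we get $\sum_n |z_n^*(x)|^p \le \sum_n \|T_n x\|^p < \infty$, so $(z_n^*)_n$ is weak$^*$ $p$-summable. The $p$-limitedness of $A$ then yields $(a_n)_n \in \ell_p$ with $|z_n^*(x)| \le a_n$ for every $x \in A$; in particular $\tfrac{1}{2}c_n \le |z_n^*(x_n)| \le a_n$, and hence $\sum_n c_n^p \le 2^p \sum_n a_n^p < \infty$, which is exactly the conclusion of (b).

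The implication (b) $\Rightarrow$ (c) is immediate: for any $(x_n)_n \subset A$ one has $\|T_n x_n\| \le \sup_{x \in A}\|T_n x\|$, so $\sum_n \|T_n x_n\|^p$ is dominated term-by-term by the series in (b). For (c) $\Rightarrow$ (a), I would specialize to $Y$ equal to the scalar field, so that a weak$^*$ $p$-summable sequence $(x_n^*)_n \subset X^*$ plays the role of $(T_n)_n$ and condition (a) becomes the statement that $\bigl(\sup_{x \in A}|x_n^*(x)|\bigr)_n \in \ell_p$. Arguing by contradiction, if this sequence were not $p$-summable I would choose, for each $n$, a point $x_n \in A$ with $|x_n^*(x_n)| \ge \tfrac{1}{2}\sup_{x \in A}|x_n^*(x)|$, giving $\sum_n |x_n^*(x_n)|^p \ge 2^{-p}\sum_n \bigl(\sup_{x\in A}|x_n^*(x)|\bigr)^p = \infty$ and contradicting (c).

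The only genuinely delicate point is the selection step in (a) $\Rightarrow$ (b): one needs a single scalar functional $z_n^*$ per index that simultaneously nearly witnesses the supremum $c_n$ at a point of $A$ \emph{and} keeps the whole sequence $(z_n^*)_n$ weak$^*$ $p$-summable. The adjoint construction $z_n^* = T_n^* y_n^*$ with $\|y_n^*\| \le 1$ resolves both requirements at once, because the norm bound on $y_n^*$ automatically forces $|z_n^*(x)| \le \|T_n x\|$ for all $x$. Everything beyond this is bookkeeping with the constant $\tfrac{1}{2}$, which may be replaced by any fixed $\lambda \in (0,1)$ without affecting the argument.
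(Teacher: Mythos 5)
Your proof is correct and follows essentially the same route as the paper's: the key implication (a)$\Rightarrow$(b) is handled identically, by choosing near-maximizers $x_n \in A$, norming functionals $y_n^* \in B_{Y^*}$ via Hahn--Banach, and observing that $(T_n^*y_n^*)_n$ is weak$^*$ $p$-summable so that $p$-limitedness applies. The only difference is that for (c)$\Rightarrow$(a) you give the short direct contradiction argument where the paper simply cites the scalar-valued case from \cite{galmirams}; your version is self-contained but not a different idea.
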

\begin{proof} 
	(a)$\Rightarrow$(b) Suppose that $A$ is a $p$-limited subset of $X$ and fix $n \in \N$. Since $T_n(A)$ is a bounded subset of $Y$, there exists $x_n \in A$  such that
    $\displaystyle\sup_{x\in A}\|T_nx\|\le 2\|T_nx_n\|.$ 
    By the Hahn-Banach theorem, exists $y_n^*\in Y^*$ with $\norma{y_n^*} = 1 $ and $y_n^*(T_nx_n)=\|T_nx_n\|.$ So, for an arbitrary
	$x\in X$, we have $$\sum_{n=1}^{\infty}\|T_n^*y_n^*(x)\|^p=\sum_{n=1}^{\infty}\|y_n^*(T_nx)\|^p\le \sum_{n=1}^{\infty}\|T_nx\|^p <\infty.$$
    Thus, $(T_n^*y_n^*(x))_n\in \ell_p$ for every $x \in X$, which implies that $(T_n^*y_n^*)_n$ is weakly $p$-summable sequence in $X^*$, and hence $(\displaystyle\sup_{x\in A}|T_n^*y_n^*(x)|)_n\in \ell_p$. Therefore,
    \begin{eqnarray*}
		\sum_{n=1}^{\infty} \left (\sup_{x\in A}\|T_nx\|\right)^p&\le& 2\sum_{n=1}^{\infty}\|T_nx_n\|^p \\
		&=& 2\sum_{n=1}^{\infty}|y_n^*(T_nx_n)|^p\\
		&\le& 2\sum_{n=1}^{\infty}\left (\sup_{x\in A}|y_n^*(T_nx)|\right )^p <\infty.
	\end{eqnarray*}
	
	\par (b)$\Rightarrow$(c) Obvious.

    (c)$\Rightarrow$(a) Take $Y = \mathbb{R}$ and apply \cite[Proposition 1.2]{galmirams}.
\end{proof}	

As an application of Theorem \ref{limitedness}, we will prove improvements of Theorem 5 and Theorem 8 in \cite{p-lcc}. First, we recall that the evaluation operators $\psi_{y^*}$ and $\phi_{x}$ on $\mathcal{M}\subset \mathcal{L}(X,Y)$ are defined by $\psi_{y^*} (T)=T^*y^*$ and $\phi_{x}(T)=Tx$, for all $y^* \in Y^* $, $x \in X$ and $T\in \mathcal{M}$.

\begin{theorem}\label{m2.1}
Suppose that $X^*$ has the weak $p$-GP property and that $Y$ is a Banach space. If $\mathcal{M}$ is a closed subspace of $\mathcal{L}(X, Y)$ contained in $\Pi_p(X, Y)$
such that all of the evaluation operators $\phi_{x}:\mathcal{M}\to Y$ are $p$-lcc, then $\mathcal{M}$ has the weak $p$-GP property. 
\end{theorem}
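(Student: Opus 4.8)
The plan is to argue by contradiction, using the characterization from \cite[p. 178, item (D)]{p-lcc} already invoked in Proposition \ref{hSj}: if $\mathcal{M}$ fails the weak $p$-GP property, there is a $p$-limited weakly null sequence $(T_n)_n \subset \mathcal{M}$ with $\|T_n\| = 1$ for all $n$. For each $n$ I would fix $x_n \in B_X$ with $\|T_n x_n\| > 1/2$ and, by Hahn--Banach, $y_n^* \in B_{Y^*}$ with $y_n^*(T_n x_n) = \|T_n x_n\|$. The goal is then to derive the contradiction $\|T_n x_n\| \to 0$.

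The heart of the argument is to prove that the set $\mathcal{B} = \{\, T_n^* y^* : n \in \N,\ y^* \in B_{Y^*} \,\}$ is $p$-limited in $X^*$. Fix a weak$^*$ $p$-summable sequence $(\xi_k)_k \subset X^{**}$ and consider the evaluation-type operators $\Phi_{\xi_k} : \mathcal{M} \to Y^{**}$ given by $\Phi_{\xi_k}(T) = T^{**}\xi_k$, which are bounded and linear. Since $\mathcal{M} \subset \Pi_p(X,Y)$, every $T \in \mathcal{M}$ is $p$-summing, hence so is $T^{**}$, and a Pietsch domination argument, with Pietsch measure supported on $(B_{X^*}, w^*)$ where weak$^*$ $p$-summability provides the uniform bound $\sup_{x^* \in B_{X^*}} \sum_k |\langle \xi_k, x^*\rangle|^p < \infty$, yields $\sum_k \|T^{**}\xi_k\|^p < \infty$ for each $T \in \mathcal{M}$. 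Thus $(\Phi_{\xi_k})_k$ satisfies the hypothesis of Theorem \ref{limitedness}(b) relative to the $p$-limited set $\{T_n\}_n \subset \mathcal{M}$, so $\sum_k \big(\sup_n \|T_n^{**}\xi_k\|\big)^p < \infty$. Since $\sup_n \|T_n^{**}\xi_k\| = \sup_{\phi \in \mathcal{B}} |\langle \xi_k, \phi\rangle|$, the scalars $a_k := \sup_n \|T_n^{**}\xi_k\|$ lie in $\ell_p$ and witness that $\mathcal{B}$ is $p$-limited.

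With $\mathcal{B}$ shown to be $p$-limited, the hypothesis that $X^*$ has the weak $p$-GP property makes $\mathcal{B}$, and in particular the sequence $(T_n^* y_n^*)_n$, relatively compact in $X^*$. To identify its limit I would bring in the remaining hypotheses: each evaluation $\phi_x : \mathcal{M} \to Y$ is $p$-lcc, so applying it to the $p$-limited weakly null sequence $(T_n)_n$ gives $\|T_n x\| \to 0$ for every fixed $x \in X$; consequently $|\langle T_n^* y_n^*, x\rangle| \le \|T_n x\| \to 0$, i.e. $(T_n^* y_n^*)_n$ is weak$^*$ null. A relatively compact weak$^*$ null sequence is norm null, since every subsequence has a norm-convergent sub-subsequence whose limit must equal the weak$^*$ limit $0$, so $\|T_n^* y_n^*\| \to 0$. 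Then $1/2 < \|T_n x_n\| = \langle T_n^* y_n^*, x_n\rangle \le \|T_n^* y_n^*\| \to 0$, which is the desired contradiction.

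I expect the main obstacle to be the technical claim in the second paragraph that the bidual of a $p$-summing operator sends weak$^*$ $p$-summable sequences in $X^{**}$ to norm $p$-summable sequences in $Y^{**}$: this is precisely what links the $\Pi_p$-hypothesis to the bidual bookkeeping forced by the definition of $p$-limitedness in the dual space $X^*$, and some care is needed to arrange the Pietsch measure on the predual ball $B_{X^*}$ so that weak$^*$ (rather than weak) $p$-summability suffices. The remaining steps are routine once Theorem \ref{limitedness} is in hand.
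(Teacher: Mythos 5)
Your overall strategy is viable and close in spirit to the paper's, though different in mechanics: the paper takes an arbitrary $p$-limited set $H\subset\mathcal{M}$ and verifies the two conditions of Ghenciu's compactness criterion for sets of operators (the pointwise images $H(x)$ are relatively compact by the $p$-lcc hypothesis, and $H^*(B_{Y^*})$ is relatively compact because it is $p$-limited and $X^*$ has the weak $p$-GP), whereas you reduce to a normalized $p$-limited weakly null sequence $(T_n)_n$ and derive a direct contradiction. Your endgame is fine: $(T_n^*y_n^*)_n$ is weak$^*$ null by the $p$-lcc hypothesis on the $\phi_x$, sits in a relatively compact set, hence is norm null, contradicting $\|T_n^*y_n^*\|\ge y_n^*(T_nx_n)=\|T_nx_n\|>1/2$. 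Your use of Theorem \ref{limitedness} to show that $\mathcal{B}=\{T_n^*y^*\}$ is $p$-limited is exactly the paper's use of that theorem for $H^*(B_{Y^*})$.

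The genuine gap is the technical claim in your second paragraph, precisely where you anticipated trouble. Since you test $p$-limitedness of $\mathcal{B}\subset X^*$ against weak$^*$ $p$-summable sequences $(\xi_k)_k\subset X^{**}$, you need $\sum_k\|T^{**}\xi_k\|^p<\infty$ for each $T\in\Pi_p(X,Y)$, and the Pietsch argument you sketch does not deliver this: the Pietsch measure $\mu$ of $T$ is a Borel measure on $(B_{X^*},w^*)$, but a general $\xi\in X^{**}$ is not weak$^*$-continuous on $B_{X^*}$ and need not even be $\mu$-measurable there, so $\int|\xi(x^*)|^p\,d\mu$ is undefined; nor does the domination $\|Tx\|\le C\left(\int|x^*(x)|^p\,d\mu\right)^{1/p}$ pass from $X$ to $X^{**}$ by a Goldstine limit, since pointwise convergence of nets is not compatible with the integral. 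The claim itself is true, and the clean way to get it is to write $\xi_k=R^*e_k$ where $R:X^*\to\ell_p$, $Rx^*=(\xi_k(x^*))_k$, observe that $(e_k)_k$ is weakly $p$-summable in $\ell_p^*$, and use $\pi_p(T^{**})=\pi_p(T)$ (local reflexivity) to conclude that $T^{**}R^*$ is $p$-summing; equivalently, invoke the duality of \cite[Theorem 3.1]{delpin2} applied to $T^*$, which gives that $T\in\Pi_p(X,Y)$ exactly when $T^*(B_{Y^*})$ is a $p$-limited subset of $X^*$. The paper sidesteps the bidual entirely by citing \cite[Corollary 3(ii)]{Ioana}: $p$-limitedness of a subset of the dual $X^*$ may be tested only against weakly $p$-summable sequences coming from $X$, and for those the required estimate $\sum_n\|Tx_n\|^p<\infty$ is literally the definition of $T$ being $p$-summing. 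If you either patch the bidual step as above or switch to that characterization, your argument is complete.
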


\begin{proof}	
Assuming that $X^*$ has the weak $p$-GP, we have that every $p$-summing operator from $X$ into $Y$ is compact (see \cite[Theorem 4]{Ioana}).
Now, let $H \subset \mathcal{M}$ be a $p$-limited set and we prove that $H$ is relatively compact. 
By \cite[Theorem 2.2]{ghenciucomp}, it suffices us to prove that that $H(x) = \conj{T(x)}{T \in H}$ is relatively compact in $Y$ for every $x \in X$ and that $H^*(B_{Y^*}) = \conj{T^*y^*}{T \in H, \, y^* \in B_{Y^*}}$ is relatively compact in $X^*$. On the one hand,
since $\phi_{x}:\mathcal{M}\to Y$ is $p$-lcc for every $x \in X$ and $H \subset \mathcal{M}$ is a $p$-limited set, $\phi_{x}(H)=H(x)$ is relatively compact for every $x \in X$. On the other hand, 
since $X^*$ has the weak $p$-GP, to prove that $H^*(B_{Y^*})$ is relatively compact in $X^*$ is enough to check that $H^*(B_{Y^*})$ is a $p$-limited set. To do this, let $(x_n)_n$ be a weakly $p$-summable sequence in $X$. Since $H \subset \mathcal{M} \subset \Pi_p(X, Y)$, we get that $$\displaystyle \sum_{n=1}^\infty \norma{\phi_{x_n}(T)}^p \leq \sum_{n=1}^\infty \norma{T(x_n)}^p < \infty \quad\text{for every}\quad T \in H.$$
So, it follows from Theorem \ref{limitedness} that 
\begin{align*}
    \sum_{n=1}^\infty \left (\sup_{x^* \in H^*(B_{Y^*})} |x^*(x_n)|
    \right)^p & = \sum_{n=1}^\infty  
    \left (\sup_{T\in H, y^*\in B_{Y^*}}| T^*y^*(x_n) |\right )^p \\
    &  = \sum_{n=1}^\infty \left
    ( \sup_{T\in H, y^*\in B_{Y^*}} | y^*(Tx_n) | \right )^p = \sum_{n=1}^\infty\left 
(\sup_{T\in H}\|T{x_n}\|\right )^p \\
& = \sum_{n=1}^\infty \left (\sup_{T \in H}\norma{\phi_{x_n}(T)}\right )^p < \infty
\end{align*}
proving that $H^*(B_{Y^*})$ is a $p$-limited set in $X^*$ by \cite[Corollary 3(ii)]{Ioana}, and we are done.
\end{proof}	

\par We say that an operator $T:X\to Y$ is $p$-limited if $T(B_X)$ is a $p$-limited subset of $Y$. The following theorem improves \cite[Theorem 5]{Ioana}. The class of all $p$-limited operators from $X$ to $Y$ is denoted by $\mathcal{L}_i^p(X,Y)$.

\begin{theorem}\label{gm2.1} Suppose that $Y$ has the weak $p$-GP property and that $X$ is a Banach space. If $\mathcal{M}$ is a closed subspace of $\mathcal{L}(X, Y)$ contained in $\mathcal{L}_i^p(X,Y)$
 such that all of the evaluation operators $\psi_{y^*}:\mathcal{M}\to Y$ are $p$-lcc, then $\mathcal{M}$ has the weak $p$-GP property.
\end{theorem}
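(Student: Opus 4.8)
The plan is to mirror the proof of Theorem \ref{m2.1}, but with the roles of the variable $x$ and the functional $y^*$ interchanged, so that the entire argument is carried out on the adjoint side. First I would record the structural fact that plays the role of ``$p$-summing operators into $Y$ are compact'' in Theorem \ref{m2.1}: if $T \in \mathcal{L}_i^p(X,Y)$ then $T(B_X)$ is a $p$-limited subset of $Y$, and since $Y$ has the weak $p$-GP property this set is relatively compact; hence $T$ is compact. Thus $H \subseteq \mathcal{M} \subseteq \mathcal{K}(X,Y)$, which places me in the setting of \cite[Theorem 2.2]{ghenciucomp}. Passing to adjoints (Schauder's theorem makes $T \mapsto T^*$ an isometry of $\mathcal{K}(X,Y)$ into $\mathcal{K}(Y^*,X^*)$), I would use the \emph{adjoint form} of that characterization: a $p$-limited set $H \subseteq \mathcal{M}$ is relatively compact provided that {\rm (i)} $H^*(y^*) = \{T^*y^* : T \in H\}$ is relatively compact in $X^*$ for every $y^* \in Y^*$, and {\rm (ii)} $H(B_X) = \{Tx : T \in H,\ x \in B_X\}$ is relatively compact in $Y$. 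Concretely, one applies \cite[Theorem 2.2]{ghenciucomp} to $H^* \subseteq \mathcal{K}(Y^*,X^*)$: condition {\rm (i)} is the pointwise condition for $H^*$, while the uniform condition $\{T^{**}x^{**}\}$ is controlled by $H(B_X)$ because compactness of $T$ gives $T^{**}(B_{X^{**}}) \subseteq \overline{T(B_X)} \subseteq Y$.

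For {\rm (i)} I would invoke exactly the mechanism of Theorem \ref{m2.1}: each evaluation $\psi_{y^*} : \mathcal{M} \to X^*$ is $p$-lcc, and a $p$-lcc operator maps $p$-limited sets to relatively compact sets. Indeed, every sequence in a $p$-limited set has, after extracting a weakly convergent subsequence via \cite[Proposition 2.1]{delpin2}, a difference that is a $p$-limited weakly null sequence, which $\psi_{y^*}$ must send to a norm-null sequence. Since $H$ is $p$-limited, it follows that $\psi_{y^*}(H) = H^*(y^*)$ is relatively compact in $X^*$ for each $y^* \in Y^*$.

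For {\rm (ii)}, since $Y$ has the weak $p$-GP property it suffices to prove that $H(B_X)$ is $p$-limited in $Y$. I would fix a weak$^*$ $p$-summable sequence $(y_n^*)_n \subset Y^*$ and note that, because every $T \in \mathcal{M} \subseteq \mathcal{L}_i^p(X,Y)$ is $p$-limited, one has $\|T^*y_n^*\| = \sup_{x \in B_X}|y_n^*(Tx)|$ and $\sum_n \|T^*y_n^*\|^p < \infty$ for each $T$; equivalently, the operators $\psi_{y_n^*} : \mathcal{M} \to X^*$ satisfy $\sum_n \|\psi_{y_n^*}(T)\|^p < \infty$ for every $T \in \mathcal{M}$. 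Using $\sup_{T \in H}\|T^*y_n^*\| = \sup_{y \in H(B_X)}|y_n^*(y)|$, an application of Theorem \ref{limitedness} to the $p$-limited set $H$ then gives
\[
\sum_{n=1}^\infty \Big(\sup_{y \in H(B_X)} |y_n^*(y)|\Big)^p = \sum_{n=1}^\infty \Big(\sup_{T \in H} \|\psi_{y_n^*}(T)\|\Big)^p < \infty,
\]
which is precisely the assertion that $H(B_X)$ is $p$-limited in $Y$ (cf. \cite[Corollary 3(ii)]{Ioana}); hence $H(B_X)$ is relatively compact.

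The hard part will be the bookkeeping of the first paragraph, namely pinning down that the correct version of \cite[Theorem 2.2]{ghenciucomp} to apply is the adjoint one (pointwise on the $X^*$-side, uniform on the $Y$-side) and justifying that the uniform adjoint condition for $H^*$ reduces to relative compactness of $H(B_X)$. This is exactly where the inclusion $\mathcal{M} \subseteq \mathcal{K}(X,Y)$ is indispensable, since the identity $T^{**}(B_{X^{**}}) \subseteq \overline{T(B_X)}$ fails without compactness of $T$; fortunately this inclusion is itself a free consequence of $Y$ having the weak $p$-GP property, which is why the two hypotheses on $\mathcal{M}$ and on $Y$ dovetail.
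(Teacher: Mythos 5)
Your proposal is correct and follows essentially the same route as the paper's proof: establish that every operator in $\mathcal{M}$ is compact (via the weak $p$-GP property of $Y$), apply the compactness criterion of \cite[Theorem 2.2]{ghenciucomp} to reduce to the two conditions on $H^*(y^*)$ and $H(B_X)$, handle the first via the $p$-lcc hypothesis on the evaluations $\psi_{y^*}$, and the second by showing $H(B_X)$ is $p$-limited through Theorem \ref{limitedness}. The only differences are expository: you supply more detail on why the adjoint form of Ghenciu's criterion applies and on why $p$-lcc operators send $p$-limited sets to relatively compact sets, both of which the paper leaves implicit.
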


\begin{proof}	
Assuming that $Y$ has the weak $p$-GP, it is easy to see that every $p$-limited operator $T: X \to Y$ is compact.
	Now, let $H \subset \mathcal{M}$ be a $p$-limited set and we prove that $H$ is relatively compact. By \cite[Theorem 2.2]{ghenciucomp}, it is enough to show that $H^*(y^*) = \conj{T^*y^*}{T \in H}$ is relatively compact in $X^*$ for every $y^* \in Y^*$ and that $H(B_{X}) = \conj{T(x)}{T\in H, \, x \in B_X}$ is relatively compact compact in $Y$. First, we notice that 
    since $\psi_{y^*}:\mathcal{M}\to Y$ is $p$-lcc for all $y^* \in Y^*$ and $H \subset \mathcal{M}$ is a $p$-limited set, $\psi_{y^*}(H)=H^*(y^*)$ is relatively compact for every $y^* \in Y^*$. Now, we observe that to prove that $H(B_{X})$ is relatively compact in $Y$, it is enough to prove that $H(B_{X})$ is $p$-limited. So, if $(y_n^*)_n$ be a weakly $p$-summable sequence in $Y^*$, we have from $H \subset \mathcal{M} \subset \mathcal{L}_i^p(X, Y)$ that $\displaystyle \sum_{n=1}^\infty \norma{T^*(y_n^*)}^p < \infty$ for every $T \in H$, because $T^*$ is $p$-summing (\cite[Theorem 3.1]{delpin2}).
    Now, since $H$ is a $p$-limited set in $\mathcal{M}$, it follows from Theorem \ref{limitedness} that 
    \begin{align*}
        \sum_{n=1}^\infty \left ( \sup_{y\in H(B_X)} |y_n^*(y)| \right )^p & = \sum_{n=1}^\infty \left ( \sup_{T\in H, x\in B_{X}} |y_n^*(T(x))| \right )^p \\
        & = \sum_{n=1}^\infty \left ( \sup_{T\in H, x\in B_{X}} |T^*y_n^*(x)| \right )^p = \sum_{n=1}^\infty \left ( \sup_{T\in H}\|T^*{y_n^*}\| \right )^p < \infty,
    \end{align*}
    proving that $H(B_{X})$ is $p$-limited and we are done.
\end{proof}	

%\textcolor{blue}{A referee could ask us if we have concrete examples of Banach lattices $X$ and $Y$ so that our Theorems apply and I. Ghenciu`s theorems don't. Can we find such examples?}

The following corollary is an application of Theorems \ref{m2.1} and \ref{gm2.1}.

\begin{corollary}\label{ghm2.1} Let $\mathcal{M}$ be a closed subspace of $\mathcal{L}(X, Y)$ and 
suppose that both $Y$ and $X^*$ have the weak $p$-GP property. \\
{\rm (1)} If $\mathcal{M}\subset \mathcal{L}_i^p(X,Y)$, then $\mathcal{M}$ has the weak $p$-GP property. \\
{\rm (2)} If $\mathcal{M}\subset \Pi_p(X,Y)$, then $\mathcal{M}$ has the weak $p$-GP property.
\end{corollary}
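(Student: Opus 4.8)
The plan is to deduce Corollary \ref{ghm2.1} directly from Theorems \ref{m2.1} and \ref{gm2.1} by producing, in each case, the missing hypothesis about the evaluation operators being $p$-lcc. The two theorems already carry the heavy lifting: Theorem \ref{gm2.1} gives part (1) provided we check that $\mathcal{M} \subset \mathcal{L}_i^p(X,Y)$ forces the evaluation operators $\psi_{y^*}$ to be $p$-lcc, and Theorem \ref{m2.1} gives part (2) provided we check that $\mathcal{M} \subset \Pi_p(X,Y)$ forces the evaluation operators $\phi_x$ to be $p$-lcc. The hypotheses that $Y$ and $X^*$ both have the weak $p$-GP property are exactly what the two theorems require, so the whole argument reduces to verifying these two $p$-lcc statements.

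For part (1), I would argue as follows. Fix $y^* \in Y^*$. To show $\psi_{y^*}: \mathcal{M} \to X^*$, $T \mapsto T^*y^*$, is $p$-lcc, take a $p$-limited weakly null sequence $(T_n)_n \subset \mathcal{M}$; I must show $\norma{T_n^* y^*} \to 0$. The key observation is that since every $T \in \mathcal{M} \subset \mathcal{L}_i^p(X,Y)$ is a $p$-limited operator, its adjoint $T^*$ is $p$-summing by \cite[Theorem 3.1]{delpin2} (exactly as used in the proof of Theorem \ref{gm2.1}). I would then exploit that $Y$ having the weak $p$-GP property makes each $p$-limited operator into $Y$ compact, and combine this with the fact that a $p$-limited weakly null sequence in $\mathcal{M}$ evaluates, via $\psi_{y^*}$, into $X^*$; the target space $X^*$ having the weak $p$-GP lets me pass from $p$-limitedness to compactness and hence to norm convergence. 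The cleanest route is probably to show directly that $\psi_{y^*}(\{T_n : n\}) = \{T_n^* y^* : n\}$ is a $p$-limited set in $X^*$ (using Theorem \ref{limitedness} with the operators $x \mapsto y^*(T_n x)$, which are $p$-summable at each point because the $T_n^*$ are uniformly $p$-summing), and then invoke the weak $p$-GP property of $X^*$ to get relative compactness, from which $\norma{T_n^* y^*} \to 0$ follows since the sequence is also weakly null.

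For part (2), the symmetric argument applies. Fix $x \in X$ and consider $\phi_x: \mathcal{M} \to Y$, $T \mapsto Tx$. To show $\phi_x$ is $p$-lcc, take a $p$-limited weakly null sequence $(T_n)_n \subset \mathcal{M}$ and show $\norma{T_n x} \to 0$. Since $\mathcal{M} \subset \Pi_p(X,Y)$, every $T \in \mathcal{M}$ is $p$-summing, so $\sum_n \norma{T_n z}^p < \infty$ for each $z \in X$ follows from the weak $p$-summability induced by the $p$-limited sequence; feeding this into Theorem \ref{limitedness} shows that $\phi_x(\{T_n : n\})$ is a $p$-limited subset of $Y$. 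The weak $p$-GP property of $Y$ then makes this set relatively compact, and since $(T_n x)_n$ is weakly null, it is norm null. This verifies the $p$-lcc hypothesis of Theorem \ref{m2.1}, and the conclusion that $\mathcal{M}$ has the weak $p$-GP property follows.

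The main obstacle I anticipate is not conceptual but bookkeeping: one must be careful that the sequence extracted from a $p$-limited weakly null sequence in $\mathcal{M}$ genuinely transports, under the evaluation maps, to a $p$-limited set in the correct dual or range space, and that the weak-null-ness is preserved so that relative compactness upgrades to norm convergence. The interplay between ``the operators in $\mathcal{M}$ are $p$-summing (or $p$-limited)'' and ``the evaluation of a $p$-limited family is again $p$-limited'' is exactly the content packaged by Theorem \ref{limitedness}, so the real work is checking that its hypothesis, namely pointwise $p$-summability $\sum_n \norma{T_n x}^p < \infty$, holds uniformly enough. Once that is in hand, both parts are immediate applications of the two preceding theorems, and no genuinely new estimate is required.
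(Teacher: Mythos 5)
Your overall architecture is the intended one: the corollary follows from Theorems \ref{m2.1} and \ref{gm2.1} once one verifies that the evaluation operators $\phi_x:\mathcal{M}\to Y$ and $\psi_{y^*}:\mathcal{M}\to X^*$ are $p$-lcc, and you pair the two parts with the two theorems correctly. The gap is in how you verify the $p$-lcc condition. For part (2) you claim that $\sum_n \norma{T_n z}^p<\infty$ for each $z\in X$ ``follows from the weak $p$-summability induced by the $p$-limited sequence,'' and for part (1) you claim the functionals $x\mapsto y^*(T_nx)$ are $p$-summable at each point ``because the $T_n^*$ are uniformly $p$-summing.'' Neither assertion holds: the fact that each $T_n$ (or $T_n^*$) is $p$-summing controls sums over a weakly $p$-summable sequence of \emph{vectors} for a \emph{fixed} operator, whereas you need summability over the operator index $n$ for a fixed vector; and a $p$-limited weakly null sequence $(T_n)_n$ in $\mathcal{M}$ need not be weakly $p$-summable, so there is no reason for $\sum_n\norma{T_nz}^p$ or $\sum_n|y^*(T_nx)|^p$ to converge. (Indeed, if $\sum_n\norma{T_nx}^p<\infty$ were true you would get $\norma{T_nx}\to0$ immediately, making the rest of your argument superfluous --- a sign the claimed step is too strong.) So Theorem \ref{limitedness} cannot be fed as you propose, and as written the verification of the $p$-lcc hypothesis fails.

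The repair is simpler than what you attempt and uses neither Theorem \ref{limitedness} nor the inclusions $\mathcal{M}\subset\Pi_p(X,Y)$ and $\mathcal{M}\subset\mathcal{L}_i^p(X,Y)$ (those are consumed by Theorems \ref{m2.1} and \ref{gm2.1} themselves). Bounded linear operators carry $p$-limited sets to $p$-limited sets: if $S:Z\to W$ is bounded and $(w_k^*)_k\subset W^*$ is weak$^*$ $p$-summable, then $(S^*w_k^*)_k$ is weak$^*$ $p$-summable in $Z^*$ because $(S^*w_k^*(z))_k=(w_k^*(Sz))_k\in\ell_p$ for every $z\in Z$; hence for a $p$-limited $A\subset Z$ one gets $\sup_{z\in A}|w_k^*(Sz)|\le a_k$ with $(a_k)_k\in\ell_p$. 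Applying this to $\phi_x$ and $\psi_{y^*}$, a $p$-limited weakly null sequence $(T_n)_n\subset\mathcal{M}$ has $\{T_nx\}$ $p$-limited in $Y$ and $\{T_n^*y^*\}$ $p$-limited in $X^*$; the weak $p$-GP property of $Y$ (resp.\ $X^*$) makes these sets relatively compact, and since they are also weakly null the sequences are norm null. This is precisely the $p$-lcc hypothesis required, and Theorems \ref{m2.1} and \ref{gm2.1} then yield the two parts of the corollary.
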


%\textcolor{red}{
	Now, we compare the weak $p$-GP property with the so-called coarse $p$-GP property.
We recall from \cite{coarse} that a bounded subset $A$ of a Banach space $X$ is said to be coarse $p$-limited if $T(A)$ is a relatively compact subset of $\ell_p$ for every bounded linear operator $T: X \to \ell_p$. A Banach space $X$ is said to have the coarse $p$-GP property whenever every coarse $p$-limited subset of $X$ is relatively compact. This property was introduced in by P. Galindo and the second author in \cite{coarse}. Then, J. Rodríguez and A. Rueda Zoca used this property weak precompact sets in projective tensor products of Banach spaces (see \cite{ruedazoca}). Since every $p$-limited subset of $X$ is coarse $p$-limited (see \cite[p. 943]{coarse}), it follows that if $X$ has the  coarse $p$-GP, then $X$ has the weak $p$-GP. The converse does not hold since reflexive spaces have the weak $p$-GP property, but there are reflexive spaces failing the coarse $p$-GP property (see \cite[Remark 4]{coarse}). We still do not know under which condition on a Banach space $X$ the weak $p$-GP property implies the coarse $p$-GP. So far, we have the following diagram:

\begin{center}
	\tikzset{every picture/.style={line width=0.75pt}} %set default line width to 0.75pt        
	
	\begin{tikzpicture}[x=0.68pt,y=0.69pt,yscale=-1,xscale=1]
		%uncomment if require: \path (0,300); %set diagram left start at 0, and has height of 300
		
		%Straight Lines [id:da5344848979997023] 
		\draw   (195,138) -- (252,137.65) -- (267,137.61) ;
		\draw [shift={(269,137.6)}, rotate = 179.83] [color={rgb, 255:red, 0; green, 0; blue, 0 }  ,draw opacity=1 ][line width=0.75]    (10.93,-3.29) .. controls (6.95,-1.4) and (3.31,-0.3) .. (0,0) .. controls (3.31,0.3) and (6.95,1.4) .. (10.93,3.29)   ;
		%Straight Lines [id:da46766359030156157] 
		\draw [color={rgb, 255:red, 0; green, 0; blue, 0 }  ,draw opacity=1 ]   (142.85,151.75) -- (189.15,193.45) -- (192.15,196.45) -- (194.44,198.74) ;
		\draw [shift={(195.85,200.15)}, rotate = 225] [color={rgb, 255:red, 0; green, 0; blue, 0 }  ,draw opacity=1 ][line width=0.75]    (10.93,-3.29) .. controls (6.95,-1.4) and (3.31,-0.3) .. (0,0) .. controls (3.31,0.3) and (6.95,1.4) .. (10.93,3.29)   ;
		%Straight Lines [id:da7017623955493097] 
		\draw    (154.5,183.6) -- (166.5,178.6) ;
		%Straight Lines [id:da06667259173615037] 
		\draw   (287.5,151.8) -- (253.61,202.54) ;
		\draw [shift={(252.5,204.2)}, rotate = 303.74] [color={rgb, 255:red, 0; green, 0; blue, 0 }  ,draw opacity=1 ][line width=0.75]    (10.93,-3.29) .. controls (6.95,-1.4) and (3.31,-0.3) .. (0,0) .. controls (3.31,0.3) and (6.95,1.4) .. (10.93,3.29)   ;
		%Straight Lines [id:da002363024735501096] 
		\draw   (270,167) -- (282,174.6) ;
		%Straight Lines [id:da8509077402795695] 
		\draw   (254,131.6) -- (197,130.63) ;
		\draw [shift={(195,130.6)}, rotate = 0.97] [color={rgb, 255:red, 0; green, 0; blue, 0 }  ,draw opacity=1 ][line width=0.75]    (10.93,-3.29) .. controls (6.95,-1.4) and (3.31,-0.3) .. (0,0) .. controls (3.31,0.3) and (6.95,1.4) .. (10.93,3.29)   ;
		%Straight Lines [id:da9187348695590876] 
		\draw   (217.3,123.36) -- (217.3,123.36) -- (224.5,131.1) -- (227.76,134.6) ;
		%Straight Lines [id:da06297054466651208] 
		\draw   (246,197) -- (280.87,146.25) ;
		\draw [shift={(282,144.6)}, rotate = 124.49] [color={rgb, 255:red, 0; green, 0; blue, 0 }  ,draw opacity=1 ][line width=0.75]    (10.93,-3.29) .. controls (6.95,-1.4) and (3.31,-0.3) .. (0,0) .. controls (3.31,0.3) and (6.95,1.4) .. (10.93,3.29)   ;
		%Straight Lines [id:da4912003090211041] 
		\draw [color={rgb, 255:red, 0; green, 0; blue, 0 }  ,draw opacity=1 ]   (189,206.6) -- (135.48,157.95) ;
		\draw [shift={(134,156.6)}, rotate = 42.27] [color={rgb, 255:red, 0; green, 0; blue, 0 }  ,draw opacity=1 ][line width=0.75]    (10.93,-3.29) .. controls (6.95,-1.4) and (3.31,-0.3) .. (0,0) .. controls (3.31,0.3) and (6.95,1.4) .. (10.93,3.29)   ;
		
		% Text Node
		\draw (95,130) node [anchor=north west][inner sep=0.75pt]   [align=left] {coarse $p$-GP};
		% Text Node
		\draw (276,130) node [anchor=north west][inner sep=0.75pt]   [align=left] {GP};
		% Text Node
		\draw (180.85,207.15) node [anchor=north west][inner sep=0.75pt]   [align=left] {weak $p$-GP};
		% Text Node
		\draw (282,171) node [anchor=north west][inner sep=0.75pt]  [color={rgb, 255:red, 0; green, 0; blue, 0 }  ,opacity=1 ] [align=left] {c};
		% Text Node
		\draw (223,112) node [anchor=north west][inner sep=0.75pt]  [color={rgb, 255:red, 0; green, 0; blue, 0 }  ,opacity=1 ] [align=left] {c$\displaystyle _{0}$};
		% Text Node
		\draw (137,176) node [anchor=north west][inner sep=0.75pt]  [color={rgb, 255:red, 0; green, 0; blue, 0 }  ,opacity=1 ] [align=left] {$\displaystyle \ell $$\displaystyle _{2}$};
	\end{tikzpicture}
\end{center}

\subsection{The weak p-GP property in Banach lattices}

The main purpose of this subsection is to develop a study of the weak $p$-GP property in Banach lattices. First, we observe that if a Banach lattice $E$ has the weak $p$-GP, then $E$ is a KB-space. Indeed,
since the canonical sequence $(e_n)_n$ is $p$-limited (for all $p \geq 1$) in $c_0$ (see \cite[Remark 2.2 and Proposition 2.1(3)]{delpin2}) which is not relatively compact, every Banach space with a copy of $c_0$ fails to have the weak $p$-GP property. Thus, $E$ must be a KB-space, and hence $E$ has the GP property (see \cite[Theorem 4.5]{WOrder}). On the other hand,  $c_0$ is a Banach space with the GP property that fails the weak $p$-GP. 
More can be said for Grothendieck Banach lattices:

\begin{theorem}\label{Sj} For a Grothendieck Banach lattice $E$, the following are equivalent:  \\
	{\rm (a)} $E$ has the $p$-GP property. \\
	{\rm (b)} $E$ has the weak $p$-GP property. \\
	{\rm (c)} $E$ has the GP property.
\end{theorem}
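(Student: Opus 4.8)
\section*{Proof proposal}

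The plan is to close a cycle of implications. The implication (a)$\Rightarrow$(b) is immediate, since relatively $p$-compact sets are relatively compact, and (b)$\Rightarrow$(c) is exactly the observation preceding the theorem (weak $p$-GP $\Rightarrow$ KB-space $\Rightarrow$ GP). Thus everything reduces to proving (c)$\Rightarrow$(a), and for this I would show that \emph{a Grothendieck Banach lattice $E$ with the GP property is reflexive}; since reflexive spaces have the $p$-GP property, this yields (a) and closes the cycle (c)$\Rightarrow$(a)$\Rightarrow$(b)$\Rightarrow$(c). The whole argument therefore hinges on establishing reflexivity, which, via the criterion that $E$ is reflexive if and only if both $E$ and $E^\ast$ have order continuous norm (see \cite{Meyer}), splits into proving that $E$ and $E^\ast$ are order continuous.

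For the order continuity of $E$ I would use the GP hypothesis together with the Grothendieck property. The key elementary remark is that in a Grothendieck space weak$^\ast$-null and weakly null sequences in the dual coincide, so a bounded set is limited if and only if it is a Dunford--Pettis set; hence the GP property is equivalent to the DP$_{rc}$P. I then claim $E$ contains no copy of $c_0$: if $j:c_0\to E$ were an isomorphic embedding, then for any weakly null $(\phi_k)_k\subset E^\ast$ one has $\sup_{x\in B_{c_0}}|\phi_k(jx)|=\|j^\ast\phi_k\|_{\ell_1}\to 0$, because $j^\ast\phi_k\to 0$ weakly in $\ell_1$ and $\ell_1$ has the Schur property; thus $j(B_{c_0})$ is a Dunford--Pettis set that is not relatively compact, contradicting the DP$_{rc}$P. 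Since a Banach lattice without a copy of $c_0$ is a KB-space, $E$ has order continuous norm.

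The heart of the proof is the order continuity of $E^\ast$, which I would derive from the Grothendieck property alone. Suppose it fails; then by the disjoint-sequence characterization of order continuity there is an order bounded disjoint sequence $(g_n)_n\subset E^\ast_+$ with $\|g_n\|\ge\delta>0$. Setting $h_k=\bigvee_{n\ge k}g_n$ (which exists since $E^\ast$ is Dedekind complete and $(g_n)_n$ is order bounded) gives $h_k\downarrow 0$ with $\|h_k\|\ge\|g_k\|\ge\delta$; moreover $(h_k)_k$ is weak$^\ast$-null because every $x\in E$ acts as an order continuous functional on $E^\ast$. The crucial point is that such a sequence cannot be weakly null: choosing $\phi_k\in B_{E^{\ast\ast}}$ with $\phi_k\ge 0$ and $\phi_k(h_k)\ge\delta/2$, monotonicity gives $\phi_k(h_j)\ge\phi_k(h_k)\ge\delta/2$ for all $j\le k$, so any weak$^\ast$-cluster point $\phi\in E^{\ast\ast}$ of $(\phi_k)_k$ satisfies $\phi(h_j)\ge\delta/2$ for every $j$, which contradicts the weak nullity of $(h_k)_k$. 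Hence $(h_k)_k$ is weak$^\ast$-null but not weakly null, contradicting the Grothendieck property, and therefore $E^\ast$ is order continuous.

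With both $E$ and $E^\ast$ order continuous, $E$ is reflexive and so has the $p$-GP property, completing (c)$\Rightarrow$(a). I expect the main obstacle to be precisely the order continuity of $E^\ast$: passing from the failure of order continuity to a genuinely non-weakly-null weak$^\ast$-null sequence (the reduction to a decreasing sequence $h_k\downarrow 0$ together with the cluster-point argument) is the delicate step, whereas the order continuity of $E$ and the reduction to reflexivity are comparatively routine once the equivalence of limited and Dunford--Pettis sets in Grothendieck spaces is in hand.
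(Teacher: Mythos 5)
Your route for (c)$\Rightarrow$(a) is, in substance, the same as the paper's: both arguments use the Grothendieck property to identify limited sets with Dunford--Pettis sets, deduce the DP$_{rc}$P and hence the absence of a copy of $c_0$, obtain order continuity of $E^*$ from the Grothendieck property, conclude that $E$ is reflexive, and finish with the fact that reflexive spaces have the $p$-GP property. There are two genuine differences. First, where the paper simply cites \cite[Theorem 5.3.13]{Meyer} for the order continuity of the dual of a Grothendieck lattice, you reprove it from scratch via $h_k=\bigvee_{n\ge k}g_n\downarrow 0$ and the weak$^*$-cluster-point argument; this is correct (the distributive law gives $\inf_k h_k=0$, the canonical image of $E$ in $E^{**}$ consists of order continuous functionals on $E^*$, and positivity of the $\phi_k$ blocks weak nullity) and makes the proof self-contained at the cost of length. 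Second, you misstate the reflexivity criterion: it is \emph{not} true that $E$ is reflexive if and only if $E$ and $E^*$ both have order continuous norm --- $c_0$ and $\ell_1$ are both order continuous, yet $c_0$ is not reflexive. The correct criterion is that $E$ and $E^*$ are both KB-spaces \cite[Theorem 2.4.15]{Meyer}. This does not sink your argument, because you actually establish the stronger facts required: you show $E$ contains no copy of $c_0$ and hence is a KB-space, and a \emph{dual} Banach lattice with order continuous norm is automatically a KB-space \cite[Theorem 2.4.14]{Meyer}. You should replace the misstated criterion by this KB-space version; with that one-line repair the proof is complete.
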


\begin{proof} 
    We already know that the
    implications (a)$\Rightarrow$(b) and (b)$\Rightarrow$(c) hold.
	%%\par  $(b)\Rightarrow (c)$ Since $E$ has the weak $p$-GP property, it is a KB-space and so has the GP property.
    It remains us to prove that  (c)$\Rightarrow$(a). Indeed, if $E$ is a Grothendieck Banach lattice with the GP property, then $E$ has the DP$_{rc}$P, which implies that $E$ fails to have a copy of $c_0$, because the canonical sequence $(e_n)_n$ is Dunford-Pettis in $c_0$ but not relatively compact. On the other hand, since $E$ is a Grothendieck Banach lattice, $E^*$ has order continuous norm (see \cite[Theorem 5.3.13]{Meyer}), and hence $E^*$ is a KB-space by \cite[Theorem 2.4.14]{Meyer}. Therefore, $E$ and $E^*$ are KB-spaces, which yield that $E$ is reflexive by \cite[Theorem 2.4.15]{Meyer}. It follows from \cite[Theorem 4.5]{delpin2} that $E$ has the $p$-GP property.
\end{proof}

%\textcolor{red}{
	The following theorem shows that each discrete KB-space has the $p$-GP property, and hence the weak $p$-GP property.
%}

\begin{theorem}\label{p-limited} %%\label{yu2j.1} 
	If $E$ is a discrete KB-space, then every $p$-limited subset of $E$ is relatively $p$-compact. Thus every discrete KB-space has the $p$-GP property, and hence the weak $p$-GP property.
\end{theorem}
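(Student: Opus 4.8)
The plan is to reduce the statement to two separate tasks: first showing that a $p$-limited set $A$ in a discrete KB-space $E$ is relatively (norm) compact, and then upgrading this to relative $p$-compactness using the atomic structure. Before starting I would record the structural facts that make $E$ tractable: since $E$ is a KB-space it has order continuous norm, so its atoms $(e_n)_n$ form a $1$-unconditional, boundedly complete basis, the coordinate (band) projections $P_N$ are contractive with $P_Nx\to x$ for every $x$, and every order interval $[-u,u]$ is norm compact (order continuity forces $\sup_{y\in[-u,u]}\|(I-P_N)y\|=\|(I-P_N)u\|\to0$). I would also recall that every $p$-limited set is relatively weakly compact by \cite[Proposition 2.1]{delpin2}, and that reflexive spaces already have the $p$-GP property by \cite[Theorem 4.5]{delpin2}; hence the reflexive discrete KB-spaces are disposed of immediately and the real content lies in the non-reflexive case.

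\emph{Step 1 (relative compactness).} Here I would use the operator form of $p$-limitedness: for every bounded $T\colon E\to\ell_p$, the set $T(A)$ is order bounded in $\ell_p$. I would argue by contradiction that $\sup_{x\in A}\|(I-P_N)x\|\to0$. If this failed, then by order continuity and a gliding-hump extraction there would be $\varepsilon>0$, elements $x^{(k)}\in A$, and indices $N_0<N_1<\cdots$ so that the band pieces $u_k:=P_{(N_{k-1},N_k]}x^{(k)}$ are pairwise disjoint with $\|u_k\|\ge\varepsilon$. I would then build a single bounded operator $T\colon E\to\ell_p$, assembled from contractive, suitably normalized functionals supported on the disjoint bands $(N_{k-1},N_k]$, whose image $T(A)$ cannot be order bounded, contradicting that $A$ is $p$-limited. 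The point where the KB hypothesis is indispensable is precisely the construction of such a \emph{bounded} detecting operator: bounded completeness of the atomic basis (equivalently, the absence of a copy of $c_0$) is what rules out the $c_0$-type behaviour that makes $(e_n)_n$ $p$-limited-but-not-compact in $c_0$.

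\emph{Step 2 (upgrade to $p$-compactness).} Granting uniform tail decay, I would choose cut-offs $0=N_0<N_1<\cdots$ so that $\rho_k:=\sup_{x\in A}\|P_{(N_{k-1},N_k]}x\|$ decays geometrically. On each finite-dimensional band $E_k=\operatorname{span}\{e_n:N_{k-1}<n\le N_k\}$ the set $A_k=\{P_{(N_{k-1},N_k]}x:x\in A\}$ has radius $\le\rho_k$, hence sits inside the $p$-convex hull of finitely many vectors $z_{k,1},\dots,z_{k,d_k}$ of norm comparable to $\rho_k$. Concatenating these finite families into one sequence $(z_m)_m$ and using $x=\sum_k P_{(N_{k-1},N_k]}x$, the geometric decay of $\rho_k$ yields $(z_m)_m\in\ell_p(E)$ and exhibits $A\subseteq p\mbox{-conv}\{(z_m)_m\}$, once the coefficient vectors are checked to have uniformly bounded $\ell_{p^*}$-norm. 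That last bound I would obtain from the coordinate $\ell_p$-control furnished by $p$-limitedness (via Theorem \ref{limitedness} applied to the relevant diagonal/band operators) combined with the block decay. This shows $A$ is relatively $p$-compact, so $E$ has the $p$-GP property and \emph{a fortiori} the weak $p$-GP property.

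The main obstacle is Step 1, and inside it the construction of the test operator. The naive choice — disjoint norming functionals $u_k^*$ feeding $x\mapsto(u_k^*(x))_k$ — is bounded into $\ell_p$ only when $E$ satisfies a lower $p$-estimate on disjoint elements, which fails in general (for instance in $\ell_2$ with $p<2$). Thus the relative-compactness step genuinely splits: reflexive discrete KB-spaces are handled by Delgado--Pi\~neiro, while the non-reflexive ones require detecting operators tailored to the lattice geometry (e.g.\ normalized block-averaging functionals) whose boundedness rests on bounded completeness of the atomic basis. By comparison, the bookkeeping in Step 2 — balancing the block dimensions $d_k$ against the radii $\rho_k$ and verifying the $\ell_{p^*}$ coefficient bound — is routine once Step 1 is in place.
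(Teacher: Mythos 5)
Your proposal does not follow the paper's route, and both of its steps contain genuine gaps. In Step 1 the whole argument hinges on producing, from the disjoint blocks $u_k=P_{(N_{k-1},N_k]}x^{(k)}$ with $\|u_k\|\ge\varepsilon$, a weak$^*$ $p$-summable sequence $(f_k)\subset E^*$ with $\inf_k|f_k(x^{(k)})|>0$. As you yourself note, disjointly supported norming functionals are weak$^*$ $p$-summable only when $E$ satisfies a lower $p$-estimate on disjoint vectors; in $\ell_2$ with $p<2$ \emph{no} normalized disjoint sequence in $E^*$ is weak$^*$ $p$-summable, so no detecting operator of the kind you describe exists, even though the theorem is true there. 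The phrase ``normalized block-averaging functionals whose boundedness rests on bounded completeness'' is not a construction: bounded completeness excludes $c_0$-behaviour but does not manufacture a bounded operator into $\ell_p$, and you explicitly flag this as the main unresolved obstacle. (If you only want relative \emph{norm} compactness, the honest route inside this paper is: $p$-limited sets are Dunford--Pettis by Lemma \ref{example 3}(1), and discrete KB-spaces have the DP$_{rc}$P; no gliding hump is needed.) Step 2 is not ``routine bookkeeping'' either: relative compactness never implies relative $p$-compactness by itself, and your covering scheme costs, on a $d_k$-dimensional block, either finitely many vectors of norm $\gtrsim\rho_k\sqrt{d_k}$ or a number of vectors exponential in $d_k$; since $d_k=N_k-N_{k-1}$ is dictated by the tail decay and is not controlled relative to $\rho_k$, the series $\sum_m\|z_m\|^p$ is not shown to converge. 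The appeal to Theorem \ref{limitedness} ``applied to the relevant diagonal/band operators'' is where the actual content would have to live, and it is not supplied. Finally, your standing assumption that the atoms form a sequence $(e_n)_n$ presumes separability, which is not part of the hypothesis.

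For comparison, the paper avoids all of this by a localization argument: it reduces to countable subsets $B$ of $A$, takes the closed ideal $G$ generated by the separable closed sublattice containing $B$, and uses discreteness plus order continuity to see that $G$ is a separable projection band which is moreover a separable \emph{dual} Banach lattice; since $B$ is then a $p$-limited subset of a separable dual space, \cite[Proposition 3.6]{delpin2} gives relative $p$-compactness directly. That argument delivers $p$-compactness in one stroke and never needs a detecting operator or a blockwise covering.
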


\begin{proof}
	Let $A \subset E$ be a $p$-limited set. As in the proof of  implication (d)$\Rightarrow$(a) in \cite[Proposition 2.1]{p-compact}, to prove that $A$ is relatively $p$-compact, it suffices to show that each countable subset of $A$ is relatively $p$-compact. 
    So, let $B : = \conj{x_n}{n \in \mathbb{N}} \subset A$ and let $F$ be the separable closed sublattice of $E$ containing $B$ (see \cite[Ex. 9, p. 204]{Positive}). Considering the ideal $I(F)$ generated by $F$, we get from the fact that $F$ is separable and $E$ is a discrete that $G = \overline{I(F)}$ is also separable (see the proof of \cite[Theorem 6.5(i)]{buwong}). Thus, $G$ is a separable closed ideal of $E$, and consequently it is a separable discrete KB-space that is the range of a positive linear projection $P: E \to E$ (see \cite[Corollary 2.4.4]{Meyer}) and contains $B$. Moreover, it follows from \cite[Exercise 5.4.E2]{Meyer} that $G$ is the dual of a Banach lattice. 
	Finally, since $B \subset G$ is a $p$-limited subset of $E$ and $G$ is a complemented subspace of $E$, we get that $B$ is a $p$-limited subset of $G$ that is a separable dual space. By \cite[Proposition 3.6]{delpin2}, we obtain that $B$ is a relatively $p$-compact set in $G$, and so it is also relatively $p$-compact in $E$.
\end{proof}

It follows from Theorem \ref{p-limited} that all $\ell_p$, $1\le p<\infty$ and Banach lattices with the Schur property have the $p$-GP property. The converse of Theorem \ref{p-limited} is not true. Indeed, $L_2([0,1])$ is a non discrete Banach lattice with the $p$-GP (because it is reflexive). However, we can prove that whenever $E$ has the weak $p$-GP property and 
 $E^*$ is a weak $p$-consistent Banach lattice, then $E$ is a discrete KB-space. Following \cite[Definition 2.4]{zeefou}, 
a Banach lattice $E$ is \textit{weak $p$-consistent} if $(|x_n|)_n$ is weakly $p$-summable whenever $(x_n)_n$ is a weakly $p$-summable sequence in $E$.
 It follows from \cite[Propostion 2.7]{zeefou} that every \text{AM}-space with unit is a weak $p$-consistent Banach lattice. In our following result, we provide characterizations for weak $p$-consistent dual Banach lattices.
%%%Note that an order interval in a Banach lattice is not necessarily $p$-limited. For instance, $B_{\ell_{\infty}}$ is an order interval which is not $p$-limited. In our following result, we will characterize  Banach lattices $E$ such that the solid hull of every $p$-limited set in $E$ is also $p$-limited.

\begin{theorem}\label{2.6}  For a Banach lattice $E$, the
	following are equivalent: \\
    {\rm (a)} $E^*$ is a weak $p$-consistent Banach lattice. \\
	{\rm (b)} For every $x\in E^+$, the order interval $[-x,x]$ is a $p$-limited set. \\
	{\rm (c)} The solid hull of every $p$-limited  subset of $E$ is likewise a $p$-limited  set. \\
    {\rm (d)} Every bounded linear operator $T: E \to \ell_p$ is regular.  \\
    {\rm (e)} $E$ is isometrically isomorphic to an $AL$-space.
\end{theorem}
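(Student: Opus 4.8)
My plan is to run the equivalences around a cycle, separating the soft identifications from the two genuinely lattice-theoretic steps. First I would rewrite (b). By the Riesz--Kantorovich identity $\sup_{|y|\le x}|x^*(y)|=|x^*|(x)$ for $x\in E^+$, the interval $[-x,x]$ is $p$-limited precisely when $(|x_n^*|(x))_n\in\ell_p$ for every weak${}^{*}$ $p$-summable $(x_n^*)\subset E^*$; letting $x$ range over $E^+$ this says that (b) is equivalent to
\[(\mathrm{b}')\qquad (x_n^*)\ \text{weak}^{*}\ p\text{-summable}\ \Longrightarrow\ (|x_n^*|)\ \text{weak}^{*}\ p\text{-summable}.\]
Next, (b)$\Leftrightarrow$(d) is immediate from the operator description of $p$-limited sets recalled in the introduction together with the Dedekind completeness of $\ell_p$: a bounded $T\colon E\to\ell_p$ is regular iff it is order bounded, i.e.\ iff $T([-x,x])$ is order bounded for every $x\in E^+$, which is exactly the requirement that each $[-x,x]$ be $p$-limited. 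Finally (c)$\Rightarrow$(b) is trivial, since a singleton is $p$-limited and $\sol{\{x\}}=[-|x|,|x|]$.

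The bridge to (a) rests on the observation that, for $1<p<\infty$, a sequence in a dual space is weak${}^{*}$ $p$-summable if and only if it is weakly $p$-summable: if $(x_n^*)$ is weak${}^{*}$ $p$-summable it defines a bounded operator $T\colon E\to\ell_p$ with $x_n^*=T^*e_n$, and the unit vectors $(e_n)$ are weakly $p$-summable in $\ell_{p^*}=\ell_p^*$, so $(x_n^*)=(T^*e_n)$ is weakly $p$-summable in $E^*$; the converse inclusion is automatic. Applying this both to $(x_n^*)$ and to $(|x_n^*|)$ turns $(\mathrm{b}')$ into (a), yielding (a)$\Leftrightarrow$(b)$\Leftrightarrow$(d) for $1<p<\infty$. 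The endpoint $p=1$ is special, since the standard vectors of $\ell_1$ are not weakly $1$-summable in any predual, so there I would recover (a) only once (e) has been reached.

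For (e)$\Rightarrow$(a): an AL-space has $E^*$ an AM-space with unit (Kakutani), which is weak $p$-consistent by \cite[Proposition 2.7]{zeefou}. I would reinforce this with a direct proof of (e)$\Rightarrow(\mathrm{b}')$ by a pointwise-sign argument: with $c=\|T\|=\sup_{\|\beta\|_{p^*}\le1}\|\sum_n\beta_nx_n^*\|$, for $\beta\ge0$ and a.e.\ $t$ one has $\sum_n\beta_n|x_n^*(t)|=\sum_n(\beta_n\,\mathrm{sgn}\,x_n^*(t))\,x_n^*(t)\le c$, so $(|x_n^*|)$ is again weak${}^{*}$ $p$-summable with the same constant. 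For (e)$\Rightarrow$(c) I would first use this modulus trick to replace $(x_n^*)$ by the positive weak${}^{*}$ $p$-summable sequence $(|x_n^*|)$, then reduce $\sol{A}$ to the positive-part set $\{a^+:a\in A\}$ (applying the reduction to $A\cup(-A)$), and finally invoke the quantitative form of $p$-limitedness from Theorem~\ref{limitedness}. The essential difficulty here is the passage from $a$ to $|a|$, equivalently an interchange of a supremum over $A$ with an $\ell_p$-sum; this genuinely uses the AL structure, and is where I expect the real work of this implication to lie.

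The principal obstacle is (d)$\Rightarrow$(e). I would argue by contraposition: if $E$ is not (lattice isomorphic to) an AL-space then $E^*$ is not an AM-space, so $\sup\{\,\|x_1^*\vee\cdots\vee x_k^*\|:\ x_i^*\in(E^*)^+,\ \max_i\|x_i^*\|\le1\,\}=\infty$. Taking finite blocks realizing this blow-up, disjointifying them, and rescaling with carefully chosen $\ell_p$-weights, I would splice them into a single weak${}^{*}$ $p$-summable sequence $(x_n^*)$ whose modulus $(|x_n^*|)$ fails to be weak${}^{*}$ $p$-summable; the operator it defines is then bounded but not regular, contradicting (d). Simultaneously guaranteeing the weak${}^{*}$ $p$-summability of $(x_n^*)$ and its failure for $(|x_n^*|)$ through the normalization is the technical heart of the argument. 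I would also flag that (a)--(d) are invariant under equivalent lattice renormings, so the natural target they isolate is ``$E$ is lattice isomorphic to an AL-space''; obtaining the isometric statement in (e) requires in addition that the regular norm coincide with the operator norm, which the closed-graph/open-mapping quantitative form of (d) should supply.
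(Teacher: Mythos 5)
Your soft steps are sound, and in places cleaner than the paper's: the reduction of (b) to the statement ``$(x_n^*)$ weak$^*$ $p$-summable $\Rightarrow$ $(|x_n^*|)$ weak$^*$ $p$-summable'' via $\sup_{|y|\le x}|x^*(y)|=|x^*|(x)$ is exactly the paper's (a)$\Leftrightarrow$(b); your (b)$\Leftrightarrow$(d) through order boundedness plus Dedekind completeness of $\ell_p$ is a nice shortcut (the paper instead proves (a)$\Rightarrow$(d) directly by splitting $T$ into $R-S$ using $(f_n^\pm)$); and your care with weak$^*$ versus weak $p$-summability is more scrupulous than the paper, which silently identifies the two. (In fact the identification via $x_n^*=T^*e_n$ with $(e_n)$ weakly $p$-summable in $\ell_{p^*}$ works for $p=1$ as well, since $(e_n)$ is weakly unconditionally Cauchy in $\ell_\infty$; your worry there concerns the basis of $\ell_1$, not of $\ell_1^*$.)

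The proposal nevertheless has two genuine gaps, both at the places you yourself flag as ``the real work.'' First, the implication into (c): you defer the interchange of $\sup_{y\in A}$ with the $\ell_p$-sum, but this is precisely the step that needs an idea, and the paper supplies it with a Riesz--Kantorovich selection (\cite[Theorem~1.23]{Positive}). Arguing by contradiction with $x_n\in\sol{A}$, $|x_n|\le|y_n|$, $y_n\in A$, one chooses for each $n$ a functional $g_n$ with $|g_n|\le|f_n|$ and $g_n(y_n)=|f_n|(|y_n|)$; then $(g_n)_n$ is weak$^*$ $p$-summable because it is dominated by $(|f_n|)_n$, and applying the $p$-limitedness of $A$ itself to $(g_n)_n$ gives $(g_n(y_n))_n\in\ell_p$, hence $(f_n(x_n))_n\in\ell_p$ since $|f_n(x_n)|\le|f_n|(|y_n|)=g_n(y_n)$. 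No AL-structure is needed: (a) alone suffices, and your positive-part reduction is not the right substitute. Second, (d)$\Rightarrow$(e): the disjointification-and-splicing construction you outline is left entirely unexecuted, and it is the technical core of a known theorem rather than something to be improvised --- the paper simply invokes the Bu--Buskes--Kusraev result (\cite{busurv}) that $\mathcal{L}^r(E,F)$ coincides isometrically with $\mathcal{L}(E,F)$ only if $E$ is an AL-space or $F^*$ is an AM-space, and rules out the second alternative because $\ell_p^*$ is not an AM-space. Your closing remark that a closed-graph argument ``should supply'' the isometric conclusion is exactly the part that cannot be waved through; as written, the two hardest arrows of the theorem remain unproved.
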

\begin{proof}
	(a)$\Leftrightarrow$(b) Let $(f_n)_n$ be a weakly $p$-summable sequence in $E^*$ and $x\in E^+$. The equivalence follows from equality
	$\sup_{y\in [-x,x]}|f_{n}(y)|=|f_{n}|(x)$.
	
	(a)$\Rightarrow$(c) Assume by way of contradiction that $A\subset E$ is a $p$-limited set such that $\sol{A}$ is not a $p$-limited. Then there exist a sequence
	$(x_n)_n$ in $\sol{A}$, a weakly $p$-summable sequence $(f_n)_n$ of $E^*$ such that $(f_n(x_n))_n\notin \ell_p$. For each $n \in \N$, there exists $y_n\in A$ such that $|x_n| \leq |y_n|$.  On the other hand, by \cite[Theorem 1.23]{Positive} there exists a sequence $(g_n)_n$ in $E^*$ with $|g_n| \leq |f_n|$ and $|f_n|(|y_n|)=g_n(y_n)$ for all $n$. 
	By hypothesis, the sequences $(|f_n|)_n$ and so $(g_n)_n$ are weakly $p$-summable. However
	$ |f_n(x_n)| \le |f_n|(|x_n|) \le |f_n|(|y_n|)=g_n(y_n)$. 
	Since $A$ is a $p$-limited subset of $E$, $(g_n(y_n))\in \ell_p$. Hence $(f_n(x_n))\in \ell_p$ which is a contradiction.

	The implication (c)$\Rightarrow$(b) is immediate.

(a)$\Rightarrow$(d) If $T: E \to \ell_p$ is a bounded linear operator given by $T(x) = (f_n(x))_n$, then $(f_n)_n$ is a weakly $p$-summable sequence in $E^*$, and so $(|f_n|)_n, (f_n^+)_n$ and $(f_n^-)_n$ are weakly $p$-summable sequences in  $E^*$. In particular, $R(x) = (f_n^+ (x))_n$ and $S(x) = (f_n^-(x))_n$ are positive operators such that $T = R - S$. Hence $T$ is a regular operator. 

(d)$\Rightarrow$(e) It follows from \cite[pg. 118]{busurv} that $\mathcal{L}^r(E, F)$ is isometrically isomorphic to $\mathcal{L}(E, F)$ if and only if $E$ is isometrically isomorphic to an $AL$-space or if $F^*$ is isometrically isomorphic to an $AM$-space. Since $\ell_p^*$ is not isometrically isomorphic to an $AM$-space, we get that $E$ is isometrically isomorphic to an $AL$-space.

(e)$\Rightarrow$(a) If $E$ is isometrically isomorphic to an $AL$-space, then $E^*$ is isometrically isomorphic to an $AM$-space with a unit, and hence $E^*$ is a weak $p$-consitent Banach lattice.
\end{proof}

The following is an easy consequence of the above theorem.

\begin{corollary}\label{yy2j.1} If $E^*$ is weak $p$-consistent and $E$ has the weak $p$-GP property, then $E$ is a discrete KB-space.
\end{corollary}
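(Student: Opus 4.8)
The plan is to feed the equivalence recorded in Theorem \ref{2.6} into the definition of the weak $p$-GP property. Since $E^*$ is weak $p$-consistent, the implication (a)$\Rightarrow$(b) of Theorem \ref{2.6} guarantees that for every $x \in E^+$ the order interval $[-x,x]$ is a $p$-limited subset of $E$. Because $E$ enjoys the weak $p$-GP property, every $p$-limited set is relatively compact, and so each order interval $[-x,x]$ is relatively norm compact. Thus the whole of the argument rests on converting the statement ``all order intervals of $E$ are relatively compact'' into discreteness of $E$.

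To finish, I would invoke the classical fact that a Banach lattice all of whose order intervals are relatively compact is necessarily discrete; note that such a lattice automatically has order continuous norm, since relatively norm compact order intervals are in particular weakly compact (cf.\ \cite{Meyer}). This gives that $E$ is discrete. Independently, as observed at the beginning of this subsection, the weak $p$-GP property precludes a copy of $c_0$ (the canonical basis of $c_0$ is a $p$-limited, non-relatively-compact set), so $E$ must be a KB-space. Combining the two conclusions, $E$ is a discrete KB-space, as claimed.

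The delicate point is the middle step, namely pinning down (and correctly citing) the assertion that relatively compact order intervals force discreteness. If a direct reference is not at hand, I would argue concretely through the AL-space description: by the equivalence (a)$\Leftrightarrow$(e) of Theorem \ref{2.6}, $E$ is, up to isometric lattice isomorphism, an $AL$-space, say $E \cong L_1(\mu)$. If $\mu$ had a non-atomic part, choose a non-atomic measurable set $\Omega_0$ of finite positive measure and put $x = \mathbf{1}_{\Omega_0} \in E^+$. By Theorem \ref{2.6}(b) the order interval $[-x,x]$ is $p$-limited in $E$, hence relatively compact by the weak $p$-GP property; but under the identification $E \cong L_1(\mu)$ this interval corresponds to $[0,\mathbf{1}_{\Omega_0}]$ in $L_1$, which is \emph{not} norm compact, since one can choose independent sets $(A_n)_n \subseteq \Omega_0$ with $\mu(A_n \triangle A_m)$ bounded away from $0$, so that $(\mathbf{1}_{A_n})_n \subseteq [0,\mathbf{1}_{\Omega_0}]$ has no norm-convergent subsequence. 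This contradiction shows $\mu$ is purely atomic, i.e.\ $E \cong \ell_1(\Gamma)$ is discrete, and the KB-space conclusion follows as above (indeed every $AL$-space is already a KB-space). I expect this final non-compactness step, and the correct attribution of the order-interval/discreteness dictionary, to be the only real obstacle; everything else is immediate from Theorem \ref{2.6}.
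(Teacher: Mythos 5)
Your argument is correct and is essentially the paper's own proof: order intervals are $p$-limited by Theorem \ref{2.6}, hence relatively compact by the weak $p$-GP property, hence $E$ is discrete, and the KB-space part follows from the $c_0$ observation. The ``order-interval/discreteness dictionary'' you were unsure how to cite is exactly \cite[Theorem 6.1]{WOrder}, which is what the paper invokes; your fallback via the $AL$-space representation and the non-compactness of $[0,\mathbf{1}_{\Omega_0}]$ in $L_1(\mu)$ is also a valid, self-contained substitute for that reference.
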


\begin{proof}
Since every Banach lattice with the weak $p$-GP is a KB-space, it remains us to prove that $E$ is discrete. In fact,
since $E^*$ is weak $p$-consistent, we get 
by Theorem \ref{2.6} that every order interval in $E$ is $p$-limited, and hence it is compact because $E$ has the weak $p$-GP.
It follows from \cite[Theorem 6.1]{WOrder} that $E$ is discrete, and we are done.
\end{proof}

As an application of Theorem \ref{2.6}, we provide a characterization for a dual Banach space having the weak $p$-GP.

\begin{corollary}\label{fShj} 
    A Banach space $X$ does not contain a copy of $\ell_1$ if and only if $X^*$ has the weak $p$-GP property.
\end{corollary}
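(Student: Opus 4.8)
The plan is to prove both implications through the correspondence between $p$-limited subsets of $X^{*}$ and $p$-summing operators from $X$ into $c_{0}$. For the implication ``$X$ does not contain $\ell_{1}$ $\Rightarrow$ $X^{*}$ has the weak $p$-GP property'', I would take a $p$-limited set $A\subseteq X^{*}$ and suppose, toward a contradiction, that it is not relatively compact. Since every $p$-limited set is relatively weakly compact (\cite[Proposition 2.1]{delpin2}), Eberlein--\v{S}mulian gives $(f_{n})_{n}\subseteq A$ with $f_{n}\to f$ weakly and $\inf_{n}\norma{f_{n}-f}\ge\delta>0$. Put $g_{n}=f_{n}-f$. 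Because evaluation at the fixed functional $f$ carries weak$^{*}$ $p$-summable sequences of $X^{**}$ into $\ell_{p}$, the set $\{g_{n}\}$ is again $p$-limited, and it is weakly null. I then define $T:X\to c_{0}$ by $Tx=(g_{n}(x))_{n}$, which is well defined and bounded since $(g_{n})_{n}$ is weak$^{*}$-null and bounded. Testing $p$-limitedness of $\{g_{n}\}$ only against the canonical images in $X^{**}$ of weakly $p$-summable sequences $(x_{k})_{k}\subseteq X$ (which are weak$^{*}$ $p$-summable) yields $\sum_{k}\norma{Tx_{k}}_{\infty}^{p}=\sum_{k}\big(\sup_{n}|g_{n}(x_{k})|\big)^{p}<\infty$, i.e.\ $T$ is $p$-summing.

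To finish this direction I would invoke two standard facts: $p$-summing operators are completely continuous \cite{diestel}; and, since $X$ contains no copy of $\ell_{1}$, Rosenthal's $\ell_{1}$ theorem \cite{albiac} makes every bounded sequence in $X$ have a weakly Cauchy subsequence, so a completely continuous operator on $X$ is automatically compact. Hence $T$ is compact, and so is $T^{*}$. But $T^{*}e_{n}=g_{n}$, so $T^{*}(B_{\ell_{1}})\supseteq\{g_{n}\}$; compactness would then force $\{g_{n}\}$ to be relatively compact, which for a weakly null sequence means norm null, contradicting $\norma{g_{n}}\ge\delta$. Therefore $A$ is relatively compact and $X^{*}$ has the weak $p$-GP property.

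For the converse ``$X^{*}$ has the weak $p$-GP property $\Rightarrow$ $X$ does not contain $\ell_{1}$'' I would argue the contrapositive, producing a $p$-limited, non-relatively-compact subset of $X^{*}$ from a copy of $\ell_{1}$ in $X$. The natural route is to run the above correspondence backwards: if $T:X\to c_{0}$ is $p$-summing with Pietsch measure $\mu$ on $(B_{X^{*}},w^{*})$, then the same domination holds for $T^{**}$, and for every weak$^{*}$ $p$-summable $(z_{k}^{**})_{k}\subseteq X^{**}$ one gets
\[
\sum_{k}\Big(\sup_{n}|\langle z_{k}^{**},g_{n}\rangle|\Big)^{p}=\sum_{k}\norma{T^{**}z_{k}^{**}}_{\infty}^{p}\le C^{p}\int_{B_{X^{*}}}\sum_{k}|\langle z_{k}^{**},x^{*}\rangle|^{p}\,d\mu(x^{*})<\infty,
\]
where $g_{n}=T^{*}e_{n}$. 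Thus $\{g_{n}\}$ is $p$-limited, and if $T$ is non-compact then $\{g_{n}\}$ is not relatively compact, so $X^{*}$ fails the weak $p$-GP property. It therefore suffices to build a \emph{non-compact} $p$-summing operator $T:X\to c_{0}$ — equivalently a weak$^{*}$-null, non-norm-null, Pietsch-$p$-dominated sequence $(g_{n})_{n}\subseteq X^{*}$ — out of a copy of $\ell_{1}$ in $X$.

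I expect this last construction to be the main obstacle. Any discrete (atomic) Pietsch measure forces $\norma{g_{n}}\to0$ and hence compactness, so the copy of $\ell_{1}$ must be exploited to manufacture a genuinely non-atomic $p$-dominating measure; this is precisely where the hypothesis $\ell_{1}\subseteq X$ is used and where the estimates are most delicate. An alternative is to route the converse through the classical embedding $\ell_{1}\subseteq X\Rightarrow c_{0}\subseteq X^{*}$ combined with the observation recorded at the start of this subsection that any Banach space containing $c_{0}$ fails the weak $p$-GP property; the point requiring care there is whether an arbitrary, not necessarily complemented, copy of $\ell_{1}$ already forces $c_{0}$ into $X^{*}$.
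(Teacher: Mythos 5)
Your forward implication is correct and self-contained, though it takes a different route from the paper: you essentially reprove, via the operator $T:X\to c_0$, $Tx=(g_n(x))_n$, that a $p$-limited (hence relatively weakly compact) set whose ``weakly null part'' is not norm null yields a non-compact $p$-summing, hence completely continuous, operator, which Rosenthal's theorem forbids when $\ell_1\not\subseteq X$. The paper instead quotes that $p$-limited subsets of $X^*$ are $L$-sets (Ghenciu) and then applies Emmanuele's characterization of spaces not containing $\ell_1$ as those whose dual $L$-sets are relatively compact; your argument is longer but more elementary and transparent, and the details you sketch (that $\{f_n-f\}$ is still $p$-limited, that $T$ is $p$-summing, that $T^*e_n=g_n$) all check out.

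The converse, however, has a genuine gap: you reduce it to constructing, from a copy of $\ell_1$ in $X$, either a non-compact $p$-summing operator $T:X\to c_0$ with a non-atomic Pietsch measure, or a copy of $c_0$ inside $X^*$ --- and you carry out neither. The second fallback is in fact false: an arbitrary copy of $\ell_1$ in $X$ does not force $c_0$ into $X^*$. For example, $C([0,1])$ contains $\ell_1$ isometrically, yet its dual $M([0,1])$ is an $AL$-space, hence weakly sequentially complete, and so contains no copy of $c_0$ (only a \emph{complemented} copy of $\ell_1$ in $X$ gives $\ell_\infty\subseteq X^*$). The missing idea, which is how the paper closes this direction, is to use Pe{\l}czy\'nski's theorem that $\ell_1\hookrightarrow X$ if and only if $L_1([0,1])\hookrightarrow X^*$, and then to exhibit the Rademacher sequence $(r_n)_n$ as a weakly null, non-norm-null, $p$-limited sequence in $L_1([0,1])$: since $L_1([0,1])$ is an $AL$-space, Theorem \ref{2.6} shows its order intervals are $p$-limited, so the order bounded set $\{r_n\}$ is $p$-limited, and its image under the embedding is a $p$-limited weakly null sequence in $X^*$ that is not norm null, contradicting the weak $p$-GP property of $X^*$. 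Without some such concrete production of a non-relatively-compact $p$-limited set in $X^*$, your converse remains a reduction rather than a proof.
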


\begin{proof}
	Assume first that $X$ does not contain a copy of $\ell_1$. If $A$ is a $p$-limited subset of $X^*$, then $A$ is an $L$-set in $X^*$ (apply \cite[Corollary 3(ii) and Proposition 1(iii)]{Ioana}), and so the assumption yieds that $A$ is relatively compact (see \cite[Theorem 2]{Emman RNP}, notice that there is a typo in the statement of this result, it should be \textit{``any (L) subset of $E^*$ is relatively compact.''}).
%%    Let $A$ be a $p$-limited subset of $X^*$. By \cite{Ioana}, $A$ is an $L$-set. Since $X$ does not contain a copy of  $\ell_1$, $A$ is relatively compact and so  $X^*$ have the weak $p$-GP property.

Now we assume that $X^*$ has the weak $p$-GP property. For the sake of contradiction, we suppose that $X^*$ contains a copy of $L_1([0,1])$, that is there exists a linear embedding $T: L_1([0,1]) \to X^*$. Since $L_1([0,1])$ is an $AL$-space, every order bounded sequence in $L_1([0,1])$ is $p$-limited by Theorem \ref{2.6}. Thus, the Rademacher sequence $(r_n)_n$ is a $p$-limited weakly null in $L_1([0,1])$, and so $(T(r_n))_n$ is a $p$-limited weakly null sequence in $X^*$. As $X^*$ has the weak $p$-GP property, $(T(r_n))_n$ is a norm null sequence in $X^*$ (see \cite[p. 178, item (D)]{p-lcc}), which is a contradiction with $\norma{r_n}_1 = 1$ for all $n \in \N$. Therefore, $X^*$ does not contain a copy of $L_1([0,1])$, and by \cite[p. 212]{Diestel}, $X$ does not contain a copy of $\ell_1$.  
\end{proof}

 \begin{corollary}  For a Banach lattice $E$, the following are equivalent:  \\
 {\rm (a)} $E$ is reflexive.\\
{\rm (b)} $E$ and $E^*$ have the $p$-GP property.\\
{\rm (c)} $E$ and $E^*$ have the weak $p$-GP property.
%%%{\rm (d)} $E$ and $E^*$ have the RNP. \textcolor{blue}{Remove, this is already known and it won't be used in the remaining of the paper}\\
\end{corollary}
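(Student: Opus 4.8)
The plan is to establish the cyclic chain (a)$\Rightarrow$(b)$\Rightarrow$(c)$\Rightarrow$(a), drawing only on facts already recorded in the paper.

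For (a)$\Rightarrow$(b), I would use that reflexivity is inherited by the dual, so that $E$ reflexive forces $E^*$ reflexive as well; since reflexive spaces have the $p$-GP property by \cite[Theorem 4.5]{delpin2}, both $E$ and $E^*$ enjoy it. The implication (b)$\Rightarrow$(c) is immediate, because relatively $p$-compact sets are relatively compact and hence the $p$-GP property always implies the weak $p$-GP property.

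The real content lies in (c)$\Rightarrow$(a). I would first invoke the observation made at the beginning of this subsection, namely that every Banach lattice with the weak $p$-GP property is a KB-space; applied to $E$, this gives that $E$ is a KB-space. The crucial step is to apply the very same implication to $E^*$: since $E^*$ is a dual Banach lattice, it is itself a Banach lattice, so its having the weak $p$-GP property (part of hypothesis (c)) forces $E^*$ to be a KB-space too. Having both $E$ and $E^*$ KB-spaces, I would conclude that $E$ is reflexive by \cite[Theorem 2.4.15]{Meyer}, closing the cycle.

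I do not anticipate a genuine obstacle: the proof is a matter of assembling the correct known results in the right order. The only subtlety worth signalling is the legitimacy of applying the ``weak $p$-GP $\Rightarrow$ KB-space'' principle to $E^*$, which is justified precisely because the dual of a Banach lattice is again a Banach lattice; everything else reduces to direct citation.
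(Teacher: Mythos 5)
Your proposal is correct and follows essentially the same route as the paper: reflexivity gives the $p$-GP property for $E$ and $E^*$, the $p$-GP property trivially implies the weak $p$-GP property, and for (c)$\Rightarrow$(a) both $E$ and $E^*$ are KB-spaces by the observation at the start of the subsection, whence reflexivity follows from \cite[Theorem 2.4.15]{Meyer}. No gaps.
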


\begin{proof}%%\textcolor{blue}{Remove}$(a)\Leftrightarrow (d)$ It follows from \cite[Proposition 5.4.13]{Meyer}.
Since reflexive spaces have the $p$-GP property and the $p$-GP property implies the weak $p$-GP, it remains us to prove that (c)$\Rightarrow$(a). Indeed, assuming that both $E$ and $E^*$ have the weak $p$-GP, then $E$ and $E^*$ are KB-spaces, which imply that $E$ is reflexive (see \cite[Theorem 2.4.15]{Meyer}).
\end{proof}

As observed in the begging of this section, it is still unknown if the weak $p$-GP implies the $p$-GP. Considering the results proven in this section, if exists a Banach lattice $E$ with the weak $p$-GP failing the $p$-GP, then $E$ must be a non-discrete KB-space. Of course $E$ cannot be reflexive, and by combining Theorem \ref{2.6} and Corollary \ref{yy2j.1} we get that $E$ cannot be an AL-space. We conclude this section with  the following open question:

\medskip

\noindent \textbf{Question:} Does there exist a Banach space with the weak $p$-GP property, failing the $p$-GP property?

%%\par \textcolor{blue}{Remove, because it has been defined in the Introduction.}We say that an operator $T:X\to Y$ is limited completely continuous (lcc) if for every limited weakly null sequence $(x_n) \subset X$, $\|Tx_n\|\rightarrow 0$ \cite{salimi}.\par \textcolor{blue}{Remove, because it has been defined in the Introduction.} We say that an operator $T:X\to Y$ is Dunford-Pettis completely continuous (DPcc) if for every Dunford-Pettis weakly null sequence $(x_n) \subset X$, $\|Tx_n\|\rightarrow 0$ \cite{Wen}.

\section{The strong p-GP property} \label{secstrong}

Motivated by the definition of weak $p$-GP property, we use the class of almost $p$-limited sets to introduce the strong $p$-GP property in a Banach lattice, which is the strong version of the $p$-GP property.

\begin{definition}\label{2.1} A Banach lattice $E$ has the \textit{strong $p$-GP property} if every almost $p$-limited set in $E$ is relatively compact.
\end{definition}

\begin{remarks}
   {\rm (1)} Every Banach lattice with the strong $p$-GP has the weak $p$-GP, because every $p$-limited set in a Banach lattice is almost $p$-limited.  \\
    {\rm (2)} If $E$ is an $AM$-space with a unit, then $B_E$ is an almost $p$-limited subset of $E$ (see \cite[Remark 3.3]{galmirams}), which implies that $E$ fails to have the strong $p$-GP property. By the same reason, $c_0$ also fails to have the strong $p$-GP (see \cite[Remark 3.10]{galmirams}). \\
    {\rm (3)} Since order bounded sets are almost $p$-limited by \cite[Remark 3.3]{galmirams}, the Rademacher sequence $(r_n)_n$ is an almost $p$-limited set in $L_2([0,1])$ which is not relatively compact. Thus $L_2([0,1])$ is a Banach lattice with the $p$-GP property (hence the weak $p$-GP and the GP properties) failing to have the strong $p$-GP. Notice that the same conclusion holds for $L_p([0,1])$ with $1 < p < \infty$. 
   
\end{remarks}

%\begin{proposition}\label{2v.1} The closed unit ball of an \textit{AL-space} is almost 1-limited.
%\end{proposition}

%\begin{proof} 
%Let $E$ be a Banach lattice and $(x_n^*)$ be a disjoint weakly 1-summable sequence in $E^*$. Since $E^*$ is an \textit{AM-space} with a unit, there is $x^*\in E^*_+$ with $|x_n^*|\le x^*$, for all $n$. Therefore $\sum_{n=1}^{k}\|x_n^*\|=\bigvee_{n=1}^{k} \|x_n^*\|\le x^*$ and $\sum_{n=1}^{\infty}\|x_n^*\|<\infty$ which implies that $(\|x_n^*\|) \in \ell_{1}$.
%It follows from the equality $(\sup_{x\in B_E}|x_n^*(x) |)=(\|x_n^* \|)$ that $B_E$ is almost 1-limited.
%\end{proof} 

In the following result, we present an important characterization of Banach lattices with the strong $p$-GP.

\begin{theorem}\label{u188} A Banach lattice $E$ has the strong $p$-GP property if and only if every weakly null almost $p$-limited sequence in $E$ is norm null.
\end{theorem}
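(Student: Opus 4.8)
The plan is to prove the two implications separately, with the reverse one carrying essentially all of the content.

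For the forward implication, suppose $E$ has the strong $p$-GP property and let $(x_n)_n$ be a weakly null almost $p$-limited sequence. Its range is a subset of an almost $p$-limited set and hence is itself almost $p$-limited, since the defining $\ell_p$-domination of $\bigl(\sup_{x\in A}|f_n(x)|\bigr)_n$ (over disjoint weak$^*$ $p$-summable $(f_n)_n$) passes trivially to subsets; by Definition \ref{2.1} it is therefore relatively compact. A relatively compact weakly null sequence is norm null, because every subsequence has a norm-convergent sub-subsequence, whose limit is forced to coincide with its weak limit $0$, so the whole sequence converges to $0$ in norm.

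For the reverse implication, I would assume that every weakly null almost $p$-limited sequence is norm null and take an almost $p$-limited set $A\subset E$. If $A$ were not relatively compact it would contain an $\epsilon$-separated sequence $(x_n)_n$, that is $\norma{x_n-x_m}\ge \epsilon$ for $n\ne m$. First I would extract a weakly Cauchy subsequence $(x_{n_k})_k$ and set $z_k:=x_{n_k}-x_{n_{k+1}}$. Then $(z_k)_k$ is weakly null; it lies in $A-A$, which is again almost $p$-limited because $\sup_{x,y\in A}|f_n(x-y)|\le 2\sup_{x\in A}|f_n(x)|$ for every disjoint weak$^*$ $p$-summable $(f_n)_n\subset E^*$; and $\norma{z_k}\ge \epsilon$ for all $k$. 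Thus $(z_k)_k$ would be a weakly null almost $p$-limited sequence that fails to be norm null, contradicting the hypothesis; hence $A$ is relatively compact and $E$ has the strong $p$-GP property.

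The hard part is justifying the extraction of the weakly Cauchy subsequence, that is, the fact that every almost $p$-limited set is conditionally weakly compact. By Rosenthal's $\ell_1$-theorem (see \cite{albiac}) this reduces to showing that $(x_n)_n$ has no subsequence equivalent to the unit vector basis of $\ell_1$, equivalently that a sequence equivalent to the $\ell_1$-basis is never almost $p$-limited. This is the delicate point, since almost $p$-limitedness only constrains \emph{disjoint} weak$^*$ $p$-summable sequences, so exhibiting functionals witnessing the failure requires a disjointification argument in $E^*$; I would isolate this conditional-weak-compactness property as a preliminary lemma and prove it first. Note that one cannot shortcut this via relative weak compactness of $A$: although every $p$-limited set is relatively weakly compact by \cite[Proposition 2.1]{delpin2}, almost $p$-limited sets need not be (this happens for all of them precisely when $E$ is a KB-space), so conditional weak compactness is exactly the right tool, and the consecutive-difference device is what lets the argument proceed without a weak limit point.
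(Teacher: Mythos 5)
Your forward implication is fine, and the ``subtract the weak limit'' device at the end of your reverse implication is essentially what the paper does. The genuine gap is the preliminary lemma you defer: the claim that every almost $p$-limited set is conditionally weakly compact (equivalently, that no sequence equivalent to the unit vector basis of $\ell_1$ is almost $p$-limited) is \emph{false} in a general Banach lattice. The closed unit ball of $C([0,1])$ is almost $p$-limited because $C([0,1])$ is an AM-space with unit (see \cite[Remark 3.3]{galmirams}), yet $C([0,1])$ contains an isometric copy of $\ell_1$, so $B_{C([0,1])}$ contains a normalized sequence equivalent to the $\ell_1$-basis with no weakly Cauchy subsequence; since subsets of almost $p$-limited sets are almost $p$-limited, that sequence is itself almost $p$-limited. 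The same occurs for $B_{\ell_\infty}$. This is precisely why Section \ref{secstrong} and Corollary \ref{example 2} of the paper characterize when almost $p$-limited sets are weakly precompact by an \emph{extra} hypothesis on $E$ (weak precompactness of order intervals), rather than proving it outright. No disjointification argument in $E^*$ can rescue your lemma, because the obstruction is a concrete counterexample, not a missing technique.

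The missing idea is that the hypothesis must be used \emph{before} any compactness extraction. Since $(e_n)_n$ is a normalized weakly null $p$-limited (hence almost $p$-limited) sequence in $c_0$, the assumption that weakly null almost $p$-limited sequences are norm null forces $E$ to contain no copy of $c_0$, i.e.\ $E$ is a KB-space. In a KB-space every almost $p$-limited set $A$ \emph{is} relatively weakly compact: for an order bounded disjoint sequence $(x_n^*)_n$ in $E^*$, $(x_n^*)_n$ is weakly $p$-summable, the almost $p$-limitedness of $A$ gives $\bigl(\sup_{x\in A}|x_n^*|(|x|)\bigr)_n\in\ell_p$, and \cite[Theorem 2.5.3]{Meyer} applies (compare Theorem \ref{r2.3u}). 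One then extracts a weakly convergent subsequence, subtracts its limit, and applies the hypothesis; the weakly-Cauchy consecutive-difference device is not needed. Your closing remark that relative weak compactness ``cannot be a shortcut'' has the logic reversed: it is unavailable only until the hypothesis has been used to exclude $c_0$, after which it is exactly the tool the proof runs on.
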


\begin{proof}
We assume that every weakly null almost $p$-limited sequence in $E$ is norm null and we prove that $E$ has the strong $p$-GP property. First, we notice that since $(e_n)_n$ is a normalized $p$-limited weakly null sequence in $c_0$, $E$ cannot not have a copy of $c_0$, and so $E$ is a KB-space.  Let $A\subset E$ be an almost $p$-limited set. We show that $A$ is relatively compact. Let $(x_n^*)_n$ be an order bounded disjoint sequence in $E^*$. Then $(x_n^*)_n$ is weakly $p$-summable \cite{strong limited} and by the almost $p$-limitedness of $A$, $(\rho _A(x_n^*))_n=(\sup_{x\in A}|x_n^*|(|x|))_n\in \ell_p$. Since $E$ is a KB-space, it follows from \cite[Theorem 2.5.3]{Meyer} that $A$ is relatively weakly compact. Then each sequence $(x_n)_n$ in $A$ has a subsequence $(x_{n_k})$ weakly convergent to $x$.	Now, the sequence $(x_{n_k}-x)$ is weakly null and almost $p$-limited. By hypothesis (b), $\|x_{n_{k}}-x\|\rightarrow 0$ which implies that $A$ is relatively compact. Therefore,  $E$ has the strong $p$-GP property. The converse is immediate.
\end{proof}

It follows from Theorem \ref{u188} that every Banach lattice with the Schur property has the strong $p$-GP, and that every Banach lattice with the strong $p$-GP is a KB-space. We will prove in Theorem \ref{188} that $E$ is a discrete KB-space if and only if $E$ has the strong $p$-GP, whenever $p \geq 2$. First, we need the following interesting fact:

\begin{lemma}\label{example 3} {\rm (1)} Every $p$-limited subset of a Banach space is Dunford-Pettis.\\
{\rm (2)} Let $2 \leq p < \infty$. Every almost $p$-limited subset of a Banach lattice is almost Dunford-Pettis. 
\end{lemma}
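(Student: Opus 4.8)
The plan is to prove each of the two statements by relating the relevant limitedness notions to their Dunford–Pettis counterparts through the defining sequences. For part (1), I would start from a $p$-limited set $A \subset X$ and take an arbitrary weakly null sequence $(x_n^*)_n \subset X^*$; the goal is to show that $\sup_{x \in A}|x_n^*(x)| \to 0$, which is exactly the definition of $A$ being Dunford–Pettis. The key observation is that any weakly null sequence in $X^*$ is, in particular, bounded, and one can exploit the gliding-hump / blocking technique: if the uniform sup did not tend to zero, one could extract a subsequence $(x_{n_k}^*)_k$ with $\sup_{x \in A}|x_{n_k}^*(x)| \geq \varepsilon > 0$. The strategy then is to manufacture from this subsequence a weak$^*$ $p$-summable sequence witnessing that $A$ is not $p$-limited, producing a contradiction. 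Concretely, a weakly null sequence in $X^*$ is weakly $p$-summable after passing to a suitable subsequence (or at least one can use that a weakly null sequence admits a weak$^*$ $p$-summable block / scaled subsequence), and $p$-limitedness forces the corresponding suprema to lie in $\ell_p$, hence to tend to $0$, contradicting the uniform lower bound $\varepsilon$.

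For part (2), with $2 \leq p < \infty$, I would run the disjoint analogue of the same argument. Let $A \subset E$ be almost $p$-limited and let $(x_n^*)_n \subset E^*$ be a \emph{disjoint weakly null} sequence; the target is $\sup_{x \in A}|x_n^*|(|x|) \to 0$, i.e. $A$ is almost Dunford–Pettis. The crucial input specific to the Banach-lattice setting is that a disjoint weakly null sequence in $E^*$ is automatically weak$^*$ $p$-summable for $p \geq 2$ — this is precisely the type of fact invoked in the proof of Theorem~\ref{u188} (the disjoint sequence being weakly $p$-summable), and I would cite the same source. Granting that, almost $p$-limitedness of $A$ gives $(\sup_{x \in A}|x_n^*|(|x|))_n \in \ell_p$, and membership in $\ell_p$ forces the terms to converge to $0$, which is exactly what almost Dunford–Pettis demands.

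I expect the main obstacle to be the passage from ``weakly null'' to ``weak$^*$ $p$-summable'' in each part, and in particular pinning down why $p \geq 2$ is needed in part (2). The cleanest route is to recall that for a weakly null sequence one only gets an $\ell_\infty$-type (bounded) condition on the scalars $(x_n^*(x))_n$ directly, whereas weak $p$-summability asks for $(x_n^*(x))_n \in \ell_p$ for each fixed $x$; the reconciliation comes from the disjointness (for part (2)) together with the lattice-theoretic fact that disjoint weakly null sequences in a Banach lattice enjoy the stronger weak $p$-summability when $p \geq 2$, a phenomenon tied to cotype-$2$-like behaviour of the disjoint structure. I would therefore make the role of $p \geq 2$ explicit by isolating the statement ``every disjoint weakly null sequence in $E^*$ is weak$^*$ $p$-summable for $p \geq 2$'' as the pivotal step and attributing it to the reference already used in Theorem~\ref{u188}. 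Once these two embedding-of-sequence-classes facts are in hand, both conclusions follow immediately from the definitions of $p$-limited / almost $p$-limited and of Dunford–Pettis / almost Dunford–Pettis, with no further computation required.
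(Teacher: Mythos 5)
Both halves of your argument hinge on the ``embedding of sequence classes'' that you yourself flag as the pivotal step, and in both cases that step is false. For part (1): a weakly null sequence in $X^*$ need not be weak$^*$ $p$-summable, and no subsequence need be either --- the unit vector basis of $\ell_2=(\ell_2)^*$ is weakly null but not weak$^*$ $1$-summable, and every subsequence is isometrically equivalent to the whole basis. The ``scaled subsequence'' escape route does not help: if you multiply $x_{n_k}^*$ by scalars $c_k$ with $(c_k)\in\ell_p$ in order to force weak$^*$ $p$-summability, then $p$-limitedness only returns $\bigl(|c_k|\sup_{x\in A}|x_{n_k}^*(x)|\bigr)_k\in\ell_p$, which already follows from $(c_k)\in\ell_p$ and the boundedness of $A$; no contradiction with $\sup_{x\in A}|x_{n_k}^*(x)|\ge\varepsilon$ is produced. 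The paper's proof of (1) runs through genuinely different machinery: $A$ is $p$-limited if and only if the operator $U_A^*:X^*\to\ell_\infty(A)$ is $p$-summing (by \cite[Proposition 2.1]{delpin2}), and $p$-summing operators are completely continuous (Pietsch factorization plus dominated convergence), so $\sup_{x\in A}|x_n^*(x)|=\|U_A^*x_n^*\|\to0$ for every weakly null $(x_n^*)_n$. Some input of this kind is unavoidable; a gliding-hump argument alone cannot reach it.

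For part (2), your pivotal claim that every disjoint weakly null sequence in $E^*$ is weak$^*$ $p$-summable for $p\ge2$ is also false, and it is not the fact used in Theorem \ref{u188}: there the disjoint sequence is assumed \emph{order bounded}. Concretely, take $E=c_0$, $E^*=\ell_1$ and $x_n^*=n^{-1/4}e_n$; this sequence is disjoint and norm null (hence weakly null), but for $y=(n^{-1/8})_n\in c_0$ one has $\sum_n|x_n^*(y)|^2=\sum_n n^{-3/4}=\infty$, so it is not even weak$^*$ $2$-summable. The paper's argument instead reduces to the case of a solid set $A$ via \cite[Proposition 3.5]{galmirams}, tests the almost Dunford--Pettis property through the criterion of \cite[Corollary 2.5]{Almost DP set} (a disjoint sequence $(x_n)_n\subset A^+$ against a disjoint weakly null sequence $(f_n)_n\subset E^*$), and then uses \cite[Proposition 3.8]{galmirams} --- valid only for $p\ge2$, which is where the restriction genuinely enters --- to conclude that such $(x_n)_n$ is $p$-limited, hence Dunford--Pettis by part (1), so that $f_n(x_n)\to0$. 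Your instinct to isolate a single sequence-theoretic lemma is sound, but the lemma that actually works concerns disjoint sequences \emph{inside the set} $A$, not the dual sequences being tested against it.
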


\begin{proof}
(1) 	Let $X$ be a Banach space and let $A$ be a $p$-limited subset of $A$. It follows from \cite[Proposition 2.1]{delpin2} that the operator $U^*_A:X^* \rightarrow \ell_{\infty}(A)$ defined by $U^*_Ax^*(x)=x^*(x)$, where
$\ell_{\infty}(A)$ is the Banach space (with supremum norm) of all
bounded real-valued functions defined on $A$.
is a $p$-summing operator, and so 
it completely continuous (see \cite[Exercise 11.8]{floret}).
Hence for every weakly null sequence $(x_n^*)_n$ in $X^*$,  $\sup_{x\in A}|x_n^*(x)|=\|U^*_A(x_n^*)\|\to 0$ which implies that $A$ is a Dunford-Pettis set in $X$.

(2) Suppose that $p \geq 2$, let $E$ be a Banach lattice and let $A\subset E$ be an almost $p$-limited. By \cite[Proposition 3.5]{galmirams}, we may assume $A$ is solid. To prove that $A$ is almost Dunford-Pettis,  let $(x_n)_n$ be a disjoint sequence in $ A^+$ and let $(f_n)_n$ be a disjoint weakly null sequence of $E^*$. 
    By \cite[Proposition 3.8]{galmirams}, $(x_n)_n$ is $p$-limited, and by item (1) $(x_n)_n$ is a Dunford-Pettis sequence. Thus $f_n(x_n) \to 0$, which proves that $A$ is an almost Dunford-Pettis set (\cite[Corollary 2.5]{Almost DP set}).
\end{proof}

\begin{theorem}\label{188} 
For a Banach lattice $E$, consider the following conditions:\\
{\rm (a)} $E$ has the strong $p$-GP property.\\
{\rm (b)} $E$ is a discrete KB-space.\\
{\rm (c)} $E$ has the strong DP$_{rc}$P. \\
Thus, (a)$\Rightarrow$(b)$\Rightarrow$(c). Moreover, (c)$\Rightarrow$(a) whenever $p \geq 2$.
\end{theorem}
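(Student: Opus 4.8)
The plan is to close the cycle by dispatching the two short implications directly and concentrating the work on $(b)\Rightarrow(c)$. For $(c)\Rightarrow(a)$ with $p\ge 2$, I would simply invoke Lemma~\ref{example 3}(2): if $A$ is almost $p$-limited then it is almost Dunford--Pettis, so the strong DP$_{rc}$P forces $A$ to be relatively compact, whence $E$ has the strong $p$-GP property. For $(a)\Rightarrow(b)$, recall from the discussion after Theorem~\ref{u188} that the strong $p$-GP property already makes $E$ a KB-space, so only discreteness remains. Since every order bounded set is almost $p$-limited by \cite[Remark 3.3]{galmirams}, each order interval $[-x,x]$ is almost $p$-limited, hence relatively compact by the strong $p$-GP property; as order intervals are now relatively compact, \cite[Theorem 6.1]{WOrder} yields that $E$ is discrete.

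The substance is $(b)\Rightarrow(c)$: in a discrete KB-space every almost Dunford--Pettis set $A$ is relatively compact. Since $E$ is discrete with order continuous norm, its atoms $(e_n)_n$ form an unconditional basis; let $P_n$ be the band projection onto the first $n$ atoms, so that $\|(I-P_n)x\|\to 0$ for each $x$ and $\sup_{x\in A}\|(I-P_n)x\|$ is non-increasing in $n$. A bounded set is relatively compact if and only if these tails are uniformly small (a standard consequence of $P_n\to I$ strongly), so arguing by contradiction I would assume $\sup_{x\in A}\|(I-P_n)x\|\ge\epsilon$ for all $n$. A standard gliding-hump construction then produces blocks $B_k=(m_{k-1},m_k]$ and elements $x_k\in A$ such that, writing $P_{B_k}=P_{m_k}-P_{m_{k-1}}$ and $u_k:=P_{B_k}x_k$, one has $\|u_k\|>\epsilon/2$. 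Choosing $g_k\in B_{E^*}$ norming $u_k$ and setting $f_k:=P_{B_k}^{*}g_k$, I obtain a disjoint sequence $(f_k)_k$ in $B_{E^*}$, each $f_k$ finitely supported on the atoms in $B_k$, with $f_k(x_k)=g_k(u_k)=\|u_k\|>\epsilon/2$.

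It then remains to show $(f_k)_k$ is weakly null, for combined with the almost Dunford--Pettis property of $A$ this gives $\sup_{x\in A}|f_k(x)|\to 0$, contradicting $f_k(x_k)>\epsilon/2$. First, order continuity of $E$ makes $(f_k)_k$ weak$^*$ null, since $|f_k(x)|\le\|P_{B_k}x\|\to 0$ for every $x\in E$. To upgrade this to weak nullity I would use that $E$ is a KB-space: then $J(E)$ is a projection band in $E^{**}$ and in fact $J(E)=(E^*)^{\sim}_{n}$, so every $\Phi\in E^{**}$ splits as $\Phi=Jx+\psi$ with $\psi$ a singular functional on $E^*$. As each $f_k$ is finitely supported on atoms it lies in the order continuous part $(E^*)_a$, on which every singular functional vanishes; hence $\Phi(f_k)=f_k(x)+\psi(f_k)=f_k(x)\to 0$. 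I expect this weak$^*$-to-weak upgrade to be the crux of the argument: it is precisely where the KB-space hypothesis is indispensable (for $E=c_0$ the block sequence $(e_n)_n\subset E^*=\ell_1$ is weak$^*$ null but not weakly null, consistent with $c_0$ failing the strong DP$_{rc}$P), and it rests on the two lattice facts that $J(E)$ exhausts the order continuous functionals on $E^*$ and that singular functionals annihilate $(E^*)_a$. Alternatively, should \cite{Strong DPrcP} already contain a usable characterization of the strong DP$_{rc}$P, the implication $(b)\Rightarrow(c)$ could instead be read off from it.
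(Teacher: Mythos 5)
Your treatment of (a)$\Rightarrow$(b) coincides with the paper's, and your (c)$\Rightarrow$(a) for $p\ge 2$ is a slightly more direct version of it: the paper passes through the sequential characterization of the strong $p$-GP (Theorem \ref{u188}) together with a sequential criterion for the strong DP$_{rc}$P from \cite{Strong DPrcP}, whereas you apply the set-level definitions straight away via Lemma \ref{example 3}(2); both are fine. The real divergence is (b)$\Rightarrow$(c): the paper simply quotes \cite[Theorem 2.3]{Strong DPrcP}, which states that the strong DP$_{rc}$P is equivalent to being a discrete KB-space, so your closing remark (``read it off from \cite{Strong DPrcP}'') is exactly what the authors do. Your self-contained gliding-hump proof is a genuine alternative and essentially sound: the blocks $u_k=P_{B_k}x_k$ with $\|u_k\|>\epsilon/2$, the disjoint functionals $f_k=P_{B_k}^*g_k$, the weak$^*$-nullity from $|f_k(x)|\le\|P_{B_k}x\|\to 0$, and the upgrade to weak nullity via $E=(E^*)^\sim_n$ for KB-spaces all check out (for the last step, the cleanest justification that $\psi(f_k)=0$ for singular $\psi$ is to note that each $f_k$ is a finite combination of the atom functionals $e_i^*$, that the band of $E^*$ generated by $e_i^*$ is one-dimensional, and that $|\psi|\wedge|Je_i|=0$ evaluated at $e_i^*$ forces $|\psi|(e_i^*)=0$).

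There is, however, one gap in that self-contained route: you assume the atoms of $E$ form a sequence $(e_n)_n$ and an unconditional basis, i.e.\ that $E$ is separable. A discrete KB-space may have uncountably many atoms (e.g.\ $\ell_1(\Gamma)$ for uncountable $\Gamma$), in which case the projections $P_n$ onto ``the first $n$ atoms'' and the tail criterion make no sense as stated. This is repairable by the same device the paper uses in Theorem \ref{p-limited}: relative compactness is checked on countable subsets, and a countable subset of a discrete Banach lattice sits inside a separable closed ideal (which is complemented and inherits the relevant properties), so one may run your argument there. You should either insert that reduction or fall back on the citation of \cite[Theorem 2.3]{Strong DPrcP}, as the paper does.
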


\begin{proof}  (a)$\Rightarrow$(b) Assuming that $E$ has the strong $p$-GP property, we already known that $E$ is a KB-space. Moreover, since every order interval in a Banach lattice is almost $p$-limited by \cite[Remark 3.3]{galmirams}, we get that every order interval in $E$ is  relatively compact, and hence $E$ is discrete (see \cite[Theorem 6.1]{WOrder}).
	\par (b)$\Leftrightarrow$(c) It follows from \cite[Theorem 2.3]{Strong DPrcP}. 
    
 Now, we suppose that $p\geq 2$ and prove that
 (c)$\Rightarrow$(a). Assume that $E$ has the strong DP$_{rc}$P and let $(x_n)_n$ be a weakly null almost $p$-limited sequence in $E$. By Lemma \ref{example 3}, $(x_n)_n$ is an almost Dunford-Pettis sequence, and by \cite[Theorem 2.9]{Strong DPrcP} we get that $\norma{x_{n}} \to 0$. Now, it follows from Theorem \ref{u188} that $E$ has the strong $p$-GP property.
\end{proof}

As a corollary, we get the following:

\begin{corollary} \label{corolario}
    For a Banach lattice $E$ and $p \geq 2$, the following are equivalent: \\
    (a) $E$ has the strong $p$-GP property. \\
    (b) Every weakly null positive $p$-limited sequence in $E$ is norm null. \\
    (c) Every positive $p$-limited subset of $E$ is relatively compact.
\end{corollary}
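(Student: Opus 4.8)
The plan is to prove the chain of equivalences (a)$\Leftrightarrow$(b)$\Leftrightarrow$(c) by exploiting the structure already established in Theorem \ref{188} and Corollary/Theorem \ref{u188}, together with the comparison between positive $p$-limited and almost $p$-limited sequences. Since (a)$\Rightarrow$(c) and (c)$\Rightarrow$(b) are the routine directions (a norm-null sequence argument and a direct consequence), I would organize the argument as (a)$\Rightarrow$(b)$\Rightarrow$(c)$\Rightarrow$(a), mirroring the cyclic structure of the preceding results.

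First I would establish (a)$\Rightarrow$(b). Assume $E$ has the strong $p$-GP property. Every positive $p$-limited set is in particular almost $p$-limited, since disjoint weak$^*$ $p$-summable sequences subsume positive weak$^*$ $p$-summable sequences up to the defining supremum estimate; thus if $(x_n)_n$ is a weakly null positive $p$-limited sequence it is a weakly null almost $p$-limited sequence, and Theorem \ref{u188} forces $\norma{x_n} \to 0$. The implication (b)$\Rightarrow$(c) is then obtained by the standard relatively-weakly-compact extraction: since the strong $p$-GP property makes $E$ a KB-space (as noted after Theorem \ref{u188}), every positive $p$-limited set $A$ is first shown to be relatively weakly compact via the order-bounded disjoint sequence criterion \cite[Theorem 2.5.3]{Meyer}, and then any sequence in $A$ has a weakly convergent subsequence whose difference with the limit is weakly null and positive $p$-limited, hence norm null by (b); this yields relative compactness of $A$.

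The implication (c)$\Rightarrow$(a) is where I expect the real content to lie, and it is the step I would treat most carefully. The goal is to deduce the strong $p$-GP property (phrased via almost $p$-limited sets) from the weaker-looking hypothesis on positive $p$-limited sets. The natural route uses Lemma \ref{example 3}(2): for $p \geq 2$, every almost $p$-limited set is almost Dunford-Pettis. Combined with the fact from Theorem \ref{188} that, for $p \geq 2$, the strong DP$_{rc}$P is equivalent to the strong $p$-GP property and both are equivalent to $E$ being a discrete KB-space, I would argue that the positive version of the hypothesis already forces $E$ to be a discrete KB-space, and then invoke Theorem \ref{188} to upgrade back to the full strong $p$-GP property. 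Concretely, I would show that hypothesis (c) implies every order interval of $E$ is relatively compact: order intervals are order bounded, hence positive $p$-limited, and (c) makes them relatively compact, so $E$ is discrete by \cite[Theorem 6.1]{WOrder}, and the KB-space property follows because a weakly null positive $p$-limited copy of $(e_n)_n$ in $c_0$ would violate (c).

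The main obstacle, and the point requiring the most care, will be ensuring that the positive-$p$-limited hypothesis genuinely captures the disjoint-sequence behavior needed for the almost $p$-limited conclusion. The delicate part is verifying that a weakly null \emph{almost} $p$-limited sequence can be controlled by the positive $p$-limited hypothesis; this is exactly where the restriction $p \geq 2$ enters through Lemma \ref{example 3}(2), routing the argument through almost Dunford-Pettis sets and \cite[Theorem 2.9]{Strong DPrcP}. I would lean on the reduction to solid sets from \cite[Proposition 3.5]{galmirams} and the disjointification results \cite[Proposition 3.8]{galmirams} to pass between disjoint positive sequences and the positive $p$-limited hypothesis, so that the final appeal to Theorem \ref{u188} closes the loop and establishes (a).
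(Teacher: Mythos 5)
Your proposal follows essentially the same route as the paper: (a)$\Rightarrow$(b) via Theorem \ref{u188} together with the fact that positive $p$-limited sets are almost $p$-limited; (b)$\Rightarrow$(c) by adapting the proof of Theorem \ref{u188}; and (c)$\Rightarrow$(a) by showing that $E$ is a discrete KB-space (order intervals are positive $p$-limited, hence relatively compact, giving discreteness via \cite[Theorem 6.1]{WOrder}; the canonical basis of $c_0$ rules out copies of $c_0$) and then invoking Theorem \ref{188}, which is precisely where the hypothesis $p\geq 2$ enters.

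One point needs repair. In your step (b)$\Rightarrow$(c) you justify that $E$ is a KB-space by saying that ``the strong $p$-GP property makes $E$ a KB-space.'' At that stage of the cycle you may only assume (b), not (a); using (a) there would collapse the chain to (a)$\Leftrightarrow$(c) without actually delivering (b)$\Rightarrow$(c) or (b)$\Rightarrow$(a). The fix is exactly the ``necessary adaptation'' of the proof of Theorem \ref{u188}: the canonical basis $(e_n)_n$ of $c_0$ is a weakly null, normalized, positive $p$-limited sequence, so hypothesis (b) itself already forbids a copy of $c_0$ in $E$ and forces $E$ to be a KB-space. With that correction, the remainder of your extraction argument --- relative weak compactness of a positive $p$-limited set via \cite[Theorem 2.5.3]{Meyer} applied through the solid hull, followed by norm convergence of the weakly null differences by (b) --- goes through and matches the paper's intent.
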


\begin{proof}
    The implication (a)$\Rightarrow$(b) follows from Theorem \ref{u188} and the fact that positive $p$-limited sets are almost $p$-limited (see \cite[Remark 3.10(5)]{galmirams}). The implication (b)$\Rightarrow$(c) can be obtained by reproducing the proof of Theorem \ref{u188} with the necessary adaptations. We prove that (c)$\Rightarrow$(a). So, we assume that every positive $p$-limited subset of $E$ is relatively compact. On the one hand, since $\conj{e_n}{n \in \N}$ is a positive normalized subset of $c_0$ that is not relatively compact, we get that $E$ must be a KB-space. On the other hand, since order intervals are positive $p$-limited, and by the assumption relatively compact, we get from \cite[Theorem 6.1]{WOrder} that $E$ is discrete. By Theorem \ref{188}, we conclude that $E$ has the strong $p$-GP property.
\end{proof}

\begin{corollary}\label{1i88} Let $p \geq 2$. A Banach lattice $E$ is discrete and reflexive if and only if $E$ and $E^*$ have the strong $p$-GP property.
\end{corollary}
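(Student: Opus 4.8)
The plan is to reduce everything to Theorem \ref{188}, which for $p \geq 2$ characterizes the strong $p$-GP property of a Banach lattice as being a discrete KB-space, and to apply it simultaneously to $E$ and to $E^*$. Under this dictionary the hypothesis ``$E$ and $E^*$ have the strong $p$-GP property'' becomes ``$E$ and $E^*$ are both discrete KB-spaces,'' and it remains to match this with ``$E$ is discrete and reflexive,'' using the classical criterion \cite[Theorem 2.4.15]{Meyer} that a Banach lattice is reflexive if and only if both it and its dual are KB-spaces.

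For the implication starting from the strong $p$-GP hypotheses, I would argue as follows: assuming $E$ and $E^*$ have the strong $p$-GP property, Theorem \ref{188} yields that $E$ and $E^*$ are discrete KB-spaces. In particular both are KB-spaces, so $E$ is reflexive by \cite[Theorem 2.4.15]{Meyer}, and $E$ is discrete. This direction is immediate and needs nothing beyond Theorem \ref{188} and the reflexivity criterion.

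For the converse, suppose $E$ is discrete and reflexive. Reflexivity forces both $E$ and $E^*$ to be KB-spaces (again \cite[Theorem 2.4.15]{Meyer}); combined with the discreteness of $E$, Theorem \ref{188} already gives that $E$ has the strong $p$-GP property. To conclude the same for $E^*$ I must verify that $E^*$ is a discrete KB-space; the KB part is clear from reflexivity, so the only remaining point — and the one genuine obstacle in the whole argument — is to show that $E^*$ is discrete.

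To handle this I would invoke the fact that the dual of a discrete Banach lattice with order continuous norm is again discrete. Since $E$ is reflexive it has order continuous norm, so this applies and forces $E^*$ to be discrete; the underlying reason is that a reflexive discrete Banach lattice carries a $1$-unconditional basis consisting of its atoms, whose biorthogonal functionals are atoms of $E^*$ and span $E^*$ by reflexivity. With $E^*$ shown to be a discrete KB-space, Theorem \ref{188} gives that $E^*$ has the strong $p$-GP property, completing the proof. The crux is therefore exactly this lattice-theoretic transfer of discreteness to the (reflexive) dual; every other step is routine bookkeeping with Theorem \ref{188} and the reflexivity-versus-KB dictionary.
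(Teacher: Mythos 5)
Your argument is correct and follows the same route as the paper: both directions reduce to Theorem \ref{188} together with the criterion that $E$ is reflexive iff $E$ and $E^*$ are KB-spaces. The only point you prove by hand --- that the dual of a reflexive (hence order continuous) discrete Banach lattice is again discrete --- is exactly what the paper disposes of by citing \cite[Corollary 2.6]{discrete}, and your sketch of that transfer is sound (though the ``$1$-unconditional basis'' phrasing implicitly assumes separability; the general case uses the band generated by the biorthogonal atoms).
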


\begin{proof} 
	 If $E$ and $E^*$ have the strong $p$-GP property, then by Theorem \ref{188}, $E$ and $E^*$ are discrete KB-space. Hence by \cite[Theorem 2.4.15]{Meyer}, $E$ is reflexive.
     
	Conversely, if $E$ is discrete and reflexive, by \cite[Corollary 2.6]{discrete} $E^*$  is discrete reflexive and by Theorem \ref{188}, $E$ and $E^*$ have the strong $p$-GP property. 
\end{proof}

\par By Theorem \ref{188} and  Theorem \ref{p-limited} we give the following corollary:

\begin{corollary}\label{1ih88} Let $p\geq 2$. For a discrete Banach lattice, the following are equivalent:\\
{\rm (a)} $E$ has the strong $p$-GP property.\\
{\rm(b)} $E$ has the weak $p$-GP property.\\
		{\rm (c)} $E$ has the $p$-GP property.
\end{corollary}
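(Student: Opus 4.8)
The plan is to prove the three-way equivalence for a discrete Banach lattice $E$ with $p \geq 2$ by assembling the machinery already developed, so the statement essentially collapses into a bookkeeping exercise that combines Theorem \ref{188} with Theorem \ref{p-limited}. First I would establish the easy implications (a)$\Rightarrow$(b) and (a)$\Rightarrow$(c) that hold for arbitrary Banach lattices: by Remarks following Definition \ref{2.1}, every $p$-limited set is almost $p$-limited, so the strong $p$-GP immediately forces the weak $p$-GP, giving (a)$\Rightarrow$(b); and since relatively $p$-compact sets are relatively compact, the $p$-GP implies the weak $p$-GP, giving (c)$\Rightarrow$(b). These require no work beyond citing the definitions.

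The crux is to close the loop using discreteness. Since $E$ is assumed discrete, I would invoke Theorem \ref{p-limited} in the following way: if $E$ has the weak $p$-GP property, then $E$ is a KB-space (every Banach lattice with the weak $p$-GP is a KB-space, as noted at the start of Subsection 2.2), so $E$ is a \emph{discrete} KB-space. Theorem \ref{p-limited} then tells us that every $p$-limited subset of a discrete KB-space is relatively $p$-compact, which is precisely the statement that $E$ has the $p$-GP property. This yields (b)$\Rightarrow$(c) and hence the equivalence of (b) and (c).

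To incorporate (a), I would use Theorem \ref{188}, which is available precisely because $p \geq 2$: that theorem gives (a)$\Leftrightarrow$(b)$\Leftrightarrow$(c) in its own notation, where its (b) is ``$E$ is a discrete KB-space.'' Concretely, from the weak $p$-GP on a discrete $E$ we obtain that $E$ is a discrete KB-space, and then Theorem \ref{188} (valid since $p\geq 2$) delivers the strong $p$-GP, closing (b)$\Rightarrow$(a). Thus the full cycle (a)$\Rightarrow$(b)$\Rightarrow$(c)$\Rightarrow$(a) is complete. I would write this as a short chain rather than verifying each pair separately.

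The only subtle point — and the step I would watch most carefully — is making sure the hypothesis $p \geq 2$ is honestly used where needed and that discreteness is invoked at the right moment. Theorem \ref{p-limited} does not require $p \geq 2$, but the passage back to the \emph{strong} $p$-GP via Theorem \ref{188} does, since the implication (c)$\Rightarrow$(a) in Theorem \ref{188} rests on Lemma \ref{example 3}(2), whose proof uses $p\geq 2$. So the logical skeleton is: weak $p$-GP plus discreteness gives discrete KB-space (no restriction on $p$); discrete KB-space gives $p$-GP by Theorem \ref{p-limited} (no restriction on $p$); and discrete KB-space gives strong $p$-GP by Theorem \ref{188} (here $p\geq 2$ is essential). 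I expect no genuine obstacle, only the need to route each implication through the correct previously proved result.
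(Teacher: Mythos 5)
Your proposal is correct and follows the same route as the paper, which derives the corollary directly from Theorem \ref{188} and Theorem \ref{p-limited}: the trivial implications from the definitions, then weak $p$-GP plus discreteness yields a discrete KB-space, which gives the $p$-GP by Theorem \ref{p-limited} and the strong $p$-GP by Theorem \ref{188} (the latter being where $p\geq 2$ is used). Your explicit tracking of where $p\geq 2$ and discreteness enter matches the paper's intended argument exactly.
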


%\begin{proof} 
%	$(a)\Rightarrow (b)$ Obvious.
%	\par $(b)\Rightarrow (a)$ If $E$ has the weak $p$-GP property, then $E$ is a KB-space. It follows from  Theorem \ref{p-limited} that, $E$ has the $p$-GP property.
%		\par $(c)\Rightarrow (a)$ It follows from  Theorem \ref{188}.
%\end{proof}

\par Our following result follows from Theorem \ref{Sj} and Corollary \ref{1ih88}:

%%% We know that $L_2$ is a non-discrete Grothendieck Banach lattice with the $p$-GP property without the strong $p$-GP property. It follows from  Theorem \ref{Sj} and Corollary \ref{1ih88} that the following corollary. Just notice that each $p$-limited set in a Grothendieck Banach lattice is limited.

\begin{corollary}\label{Sgj} Let $p\geq 2$. For a discrete Grothendieck Banach lattice $E$, the following are equivalent:  \\
{\rm (a)} $E$ has the strong $p$-GP property. \\
{\rm (b)} $E$ has the strong GP property. \\
{\rm (c)} $E$ has the weak $p$-GP property. \\
{\rm (d)} $E$ has the $p$-GP property. \\
{\rm (e)} $E$ has the GP property.
\end{corollary}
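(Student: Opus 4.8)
The plan is to combine the two cited results to collapse four of the five conditions, and then to insert the strong GP property (b) by a short topological reduction resting on reflexivity. First I would record the equivalences that come for free: applying Theorem \ref{Sj} to the Grothendieck Banach lattice $E$ gives (c)$\Leftrightarrow$(d)$\Leftrightarrow$(e), while applying Corollary \ref{1ih88} to the discrete Banach lattice $E$ (this is where $p\ge 2$ is used) gives (a)$\Leftrightarrow$(c)$\Leftrightarrow$(d). Hence (a), (c), (d), (e) are already mutually equivalent, and the entire statement reduces to showing that (b) is equivalent to any one of them; I would aim to establish (b)$\Leftrightarrow$(e).

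For (b)$\Rightarrow$(e), I would simply observe that every limited set is almost limited: if every weak$^*$-null sequence in $E^*$ converges uniformly to zero on $A$, then in particular every \emph{disjoint} weak$^*$-null sequence does. Consequently the strong GP property forces every limited set to be relatively compact, that is, $E$ has the GP property.

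The substance lies in the converse (e)$\Rightarrow$(b). I would first use the equivalences already obtained to pass from (e) to (a), and then invoke the remark following Theorem \ref{u188} (every Banach lattice with the strong $p$-GP property is a KB-space), so that $E$ is a discrete KB-space. Since $E$ is Grothendieck, $E^*$ has order continuous norm (\cite[Theorem 5.3.13]{Meyer}), hence $E^*$ is a KB-space (\cite[Theorem 2.4.14]{Meyer}); as $E$ is a KB-space as well, $E$ is reflexive (\cite[Theorem 2.4.15]{Meyer}). The key point is then that in a reflexive Banach lattice the weak and weak$^*$ topologies of $E^*$ coincide (because $E^{**}=E$), so a disjoint sequence in $E^*$ is weak$^*$-null if and only if it is weakly null; therefore a subset of $E$ is almost limited precisely when it is almost Dunford-Pettis, and the strong GP property coincides with the strong DP$_{rc}$P. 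Finally, a discrete KB-space has the strong DP$_{rc}$P by Theorem \ref{188}, whence $E$ has the strong GP property, which is (b).

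The main obstacle, and really the only step beyond bookkeeping, is the identification of almost limited sets with almost Dunford-Pettis sets under reflexivity, together with the preceding deduction that a discrete Grothendieck lattice satisfying (e) is forced to be reflexive; once these are in place, the equivalence of (b) with the remaining conditions follows at once from Theorem \ref{188}.
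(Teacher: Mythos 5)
Your proposal is correct, and it is actually more complete than what the paper itself records: the paper justifies this corollary with the single sentence that it ``follows from Theorem \ref{Sj} and Corollary \ref{1ih88}'', which, exactly as you observe, only yields the mutual equivalence of (a), (c), (d) and (e); the insertion of the strong GP property (b) is left implicit. Your two supplementary steps are both sound. The implication (b)$\Rightarrow$(e) is immediate since limited sets are almost limited. For (e)$\Rightarrow$(b), your chain (e)$\Rightarrow$(a)$\Rightarrow$ $E$ is a (discrete) KB-space, combined with the Grothendieck hypothesis giving that $E^*$ is a KB-space and hence that $E$ is reflexive, is exactly the reflexivity argument already used inside the proof of Theorem \ref{Sj}; and the observation that reflexivity makes the weak and weak$^*$ topologies on $E^*$ coincide, so that almost limited sets and almost Dunford--Pettis sets are the same class and the strong GP property becomes the strong DP$_{rc}$P, is the right way to close the loop via the equivalence (b)$\Leftrightarrow$(c) of Theorem \ref{188}. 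In short: for the four conditions the paper does address, you take the same route; for condition (b) you supply a missing argument that the paper should arguably have included.
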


By considering a non-Grothendieck Banach lattice, Corollary \ref{Sgj} fails. For instance, $L_2([0,1])$ has the $p$-GP property, but it does not have the strong $p$-GP property.

\section{Miscellanea}

The final part of this article contains additional links of the notions we have discussed and related results.
While the strong $p$-GP property studies the Banach lattices in which every almost $p$-limited set is relatively compact, what can be told for Banach lattices in which every almost $p$-limited is relatively weakly compact?
It is known that $p$-limited sets are always relatively weakly compact (see \cite[Proposition 2.1]{delpin2}). This results does not remain true for almost $p$-limited sets. For instance, the closed unit ball of $C([0,1])$ is an almost $p$-limited set by \cite[Remark 3.3]{galmirams} which is not even weakly precompact. Moreover, we have the following:
%%know that if every almost $p$-limited set in a Banach lattice $E$ is relatively weakly compact, then $E$ must have order continuous norm. The converse is false. Since $c_0$ has order continuous norm and $B_{c_0}$ is an almost $p$-limited subset of $c_0$ which is not relatively weakly compact.

\begin{theorem}\label{r2.3u} 
	For a Banach lattice $E$, the following are equivalent: \\
	{\rm (a)} Every almost $p$-limited set in $E$ is relatively weakly compact. \\
	{\rm (b)} Every positive $p$-limited set in $E$ is relatively weakly compact. \\
	{\rm(c)} $E$ is a KB-space. \\
	{\rm (d)} Every weakly Cauchy and almost $p$-limited sequence in $E$ is weakly convergent. \\
	{\rm (e)} Every weakly Cauchy and positive $p$-limited sequence in $E$ is weakly convergent. 
\end{theorem}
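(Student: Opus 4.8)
The plan is to establish the web of implications (a)$\Rightarrow$(b), (a)$\Rightarrow$(d), (d)$\Rightarrow$(e) almost for free from the inclusion ``positive $p$-limited $\Rightarrow$ almost $p$-limited'' (\cite[Remark 3.10(5)]{galmirams}), and then to close the loop with the two substantive implications (c)$\Rightarrow$(a) and (b)$\Rightarrow$(c), the latter doubling as (e)$\Rightarrow$(c). Indeed, since every positive $p$-limited set is almost $p$-limited, (a) immediately gives (b) and (d) gives (e). For (a)$\Rightarrow$(d), given a weakly Cauchy almost $p$-limited sequence $(x_n)_n$, the set $\{x_n : n \in \N\}$ is almost $p$-limited, hence relatively weakly compact by (a); by Eberlein--\v{S}mulian it has a weakly convergent subsequence, and a weakly Cauchy sequence possessing a weakly convergent subsequence converges weakly to the same limit. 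The resulting cycles $\text{(a)}\Rightarrow\text{(b)}\Rightarrow\text{(c)}\Rightarrow\text{(a)}$ and $\text{(a)}\Rightarrow\text{(d)}\Rightarrow\text{(e)}\Rightarrow\text{(c)}\Rightarrow\text{(a)}$ then yield the full equivalence.

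For (c)$\Rightarrow$(a) I would reproduce the relatively-weakly-compact part of the argument already used in the proof of Theorem \ref{u188}. Let $A \subset E$ be almost $p$-limited; by \cite[Proposition 3.5]{galmirams} we may assume $A$ solid. For any order bounded disjoint sequence $(x_n^*)_n$ in $E^*$, such a sequence is weak$^*$ $p$-summable (as in the proof of Theorem \ref{u188}), so almost $p$-limitedness forces $\rho_A(x_n^*) = \sup_{x\in A}|x_n^*|(|x|) \in \ell_p$, and in particular $\rho_A(x_n^*) \to 0$. Since $E$ is a KB-space, the Meyer--Nieberg weak-compactness criterion \cite[Theorem 2.5.3]{Meyer} gives that $A$ is relatively weakly compact.

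The heart of the matter, and the main obstacle, is the contrapositive of (b)$\Rightarrow$(c) (which simultaneously handles (e)$\Rightarrow$(c)): whenever $E$ fails to be a KB-space I must produce a single positive $p$-limited set that is not relatively weakly compact and comes from a weakly Cauchy, non-weakly-convergent sequence. If $E$ is not a KB-space, then by \cite[Theorem 2.4.12]{Meyer} there is a positive normalized disjoint sequence $(y_j)_j$ equivalent to the $c_0$-basis, i.e. a lattice embedding $J : c_0 \to E$ with $J(e_j)=y_j$. Put $s_n = \sum_{j=1}^n y_j = J(u_n)$, where $u_n = \sum_{j=1}^n e_j$ is the summing sequence of $c_0$. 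Then $(s_n)_n$ is increasing, norm bounded, and weakly Cauchy (since $f\circ J \in c_0^* = \ell_1$ for every $f \in E^*$, the series $\sum_j f(y_j)$ converges). Writing $Z = \overline{\operatorname{span}}\{y_j\}$, which is a closed, hence weakly closed, subspace on which $J^{-1}:Z \to c_0$ is bounded, relative weak compactness of $\{s_n\}$ in $E$ would transfer to $\{u_n\}$ in $c_0$; this is impossible, as $(u_n)_n$ is weakly Cauchy but not weakly convergent in $c_0$. Hence $(s_n)_n$ is neither weakly convergent nor relatively weakly compact.

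It remains to verify that $\{s_n\}$ is positive $p$-limited, which is the delicate computation I expect to be the crux. Given a positive weak$^*$ $p$-summable sequence $(f_k)_k \subset (E^*)^+$, the functionals $g_k = J^* f_k = f_k\circ J$ are positive and weak$^*$ $p$-summable in $c_0^* = \ell_1$, with $\sup_{y\in B_{c_0}}\|(g_k(y))_k\|_p \le \|J\|\,\sup_{z}\|(f_k(z))_k\|_p =: W < \infty$. Since $g_k \geq 0$ we have $\sup_n f_k(s_n) = \sup_n g_k(u_n) = \|g_k\|_{\ell_1}$, and because $g_k(u_m) \uparrow \|g_k\|_{\ell_1}$ as $m \to \infty$, monotone convergence over the index $k$ gives $\sum_k \|g_k\|_{\ell_1}^p = \lim_m \sum_k g_k(u_m)^p \le W^p$, so $(\sup_n f_k(s_n))_k = (\|g_k\|_{\ell_1})_k \in \ell_p$. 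Thus $\{s_n\}$ is positive $p$-limited, witnessing the failure of both (b) and (e) when $E$ is not a KB-space. The positive directions are routine once the mechanism of Theorem \ref{u188} and the criterion \cite[Theorem 2.5.3]{Meyer} are in hand; the only genuinely technical point is ensuring weak$^*$ $p$-summability passes through $J^*$ and that the suprema over the summing sequence land back in $\ell_p$.
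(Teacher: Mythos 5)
Your proposal is correct and follows essentially the same route as the paper: the implication (c)$\Rightarrow$(a) via the Meyer--Nieberg weak-compactness criterion \cite[Theorem 2.5.3]{Meyer} applied to the solid hull, the trivial inclusions from ``positive $p$-limited $\Rightarrow$ almost $p$-limited,'' and the reverse direction by exhibiting the summing basis of a (lattice) copy of $c_0$ as a weakly Cauchy, non-weakly-convergent, positive $p$-limited sequence when $E$ is not a KB-space. The only cosmetic difference is that you verify the positive $p$-limitedness of $\{s_n\}$ directly by the monotone-convergence computation, whereas the paper cites \cite[Remark 3.10(7)]{galmirams} for the corresponding set in $c_0$ and pushes it forward through the positive embedding; both are valid.
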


\begin{proof} 
(a)$\Rightarrow$(c) First note that $c_0$ has order continuous norm and $B_{c_0}$ is an almost $p$-limited subset of $c_0$ which is not relatively weakly compact. Thus,
	if each almost $p$-limited set in $E$ is relatively weakly compact, $E$ does not contain a copy of $c_0$, and so it is a KB-space.
	 	\par (c)$\Rightarrow$(a) Assume that $E$ is a KB-space and let $A$ be an almost $p$-limited subset of $E$. 
	 If $(x_n^*)_n$ is an order bounded disjoint sequence in $E^*$, then $(x_n^*)_n$ is weakly $p$-summable (see, e.g., \cite[P. 192]{Positive}), and by \cite[Proposition 2.2]{zeefou}, $(|x_n^*|)_n$ is also weakly $p$-summable.
	 Since $\sol{A}$ is almost $p$-limited, there exists $(a_j)_j \in \ell_p$ such that
	 $||x_n^*|(x)| \leq a_n$ for all $x \in \sol{A}$ and $n \in \N$, which implies that
	 $$ \rho_{A}(x_n^*) := \sup_{x \in A} |x_n^*|(|x|) \leq \sup_{y \in \sol{A}} ||x_n^*|(y)| \leq |a_n| \to 0. $$
	 By \cite[Theorem 2.5.3]{Meyer}, we obtain that $A$ is relatively weakly compact.

    The equivalence  (b)$\Leftrightarrow$(c) is similar to the one proved in (a)$\Leftrightarrow$(b), and the implication
    (c)$\Rightarrow$(d) is obvious.
    
\par (d)$\Rightarrow$(e) This follows because every positive $p$-limited set is almost $p$-limited by \cite[Remark 3.10(5)]{galmirams}.

\par (e)$\Rightarrow$(c) If $E$ is not a KB-space, there exists a positive embedding $T: c_0 \to E$. Considering $x_n = (1,\overset{n}{\dots}, 1, 0, \dots ) \in c_0$ for every $n \in \N$, we get that $(x_n)_n$ is weakly Cauchy and positive $p$-limited in $c_0$ (For the positive $p$-limited, \cite[Remark 3.10(7)]{galmirams}). Since $T$ is continuous, $(Tx_n)$ is weakly Cauchy in $E$, and since $T$ is positive, $(Tx_n)$ is positive $p$-limited in $E$. Now, the assumption yields that $(Tx_n)$ is weakly convergent in $E$. Let $z \in E$ such that $Tx_n \overset{w}{\longrightarrow} z$.
As $T(c_0)$ is a norm-closed subspace of $E$, $z \in T(c_0)$. This yields that $(x_n)_n$ is weakly convergent in $c_0$, a contradiction.
\end{proof}

	It is a natural question if there exists some type of weakly compact sets that contains the almost $p$-limited sets. 
In fact, every almost $p$-limited set is disjointly weakly compact whenever $p \geq 2$. We recall that a bounded subset $A$ of a Banach lattice $E$ is called a disjointly weakly compact set if for every disjoint sequence contained in $\sol{A}$ is weakly null (see, e.g., \cite[Definition 2.1]{disjointly}). The following implications, which hold for sets in Banach lattices whenever $p \geq 2$,
arise from Lemma \ref{example 3} and \cite[Remark 2.4(1)]{disjointly}:
$$ \text{almost $p$-limited} \Rightarrow  \text{almost Dunford-Pettis }\Rightarrow \text{disjointly weakly compact}.$$
The reverse arrows do not hold in general.  For instance, on the one hand, $B_{\ell_2}$ is a disjointly weakly compact subset of $\ell_2$ that is not almost Dunford-Pettis. On the other hand, the set $\displaystyle A = \left \{ \frac{e_n}{n^{1/p}} : n \in \N \right \} \subset \ell_p$ is relatively compact, and hence almost Dunford-Pettis, but it fails to be almost $p$-limited in $\ell_p$, because it is not order bounded.
	The following characterizes whenever every almost $p$-limited set in a Banach lattice is weakly precompact. 
	%By \cite[Corollary 3.11]{disjointly}, we have the following corollary.
%}

%\par \textcolor{blue}{I am not sure if we need this.} Recall that $E^*$ has $(L)$-weak$^*$ sequentially continuous lattice operations, if $\vert f_{n}\vert\stackrel{w^*}{\longrightarrow}0$, for every $L$-sequence $(f_n)$ in $E^*$ satisfying $f_{n}\stackrel{w^*}{\longrightarrow}0$, \cite{XCL}. 

\begin{corollary}\label{example 2}
Let $2\le p<\infty$ and $E$ be a Banach lattice. Then the following assertions are equivalent. \\
{\rm (a)} Every almost $p$-limited set in $E$ is weakly precompact. \\
{\rm (b)} Every order interval in $E$ is weakly precompact. \\
{\rm (c)} Every disjointly weakly compact set in $E$ is weakly precompact. 
%{\rm (d)} $E^*$ has $(L)$-weak$^*$ sequentially continuous lattice operations. \textcolor{blue}{We can delete this item since we do not use this in the remain of the paper.}
\end{corollary}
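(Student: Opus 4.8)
The plan is to obtain the two outer implications essentially for free from facts already recorded and to concentrate all the effort on $(b)\Rightarrow(c)$. For $(a)\Rightarrow(b)$ it suffices to note that each order interval $[-x,x]$ is order bounded, hence almost $p$-limited by \cite[Remark 3.3]{galmirams}, so (a) forces it to be weakly precompact. For $(c)\Rightarrow(a)$ I would use the chain recalled immediately before the corollary: since $p\ge 2$, every almost $p$-limited set is disjointly weakly compact, whence (c) applies verbatim. Both steps are one line.

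The heart of the matter is $(b)\Rightarrow(c)$. Assume every order interval of $E$ is weakly precompact, let $A$ be disjointly weakly compact, and let $(x_n)\subseteq A$; I must extract a weakly Cauchy subsequence. The strategy is to pass to a subsequence along which $x_{n_k}=y_k+z_k$, where $(z_k)$ is pairwise disjoint with $|z_k|\le|x_{n_k}|$ and $(y_k)$ is almost order bounded. Granting such a splitting, the two pieces are governed by the two hypotheses, respectively: because $|z_k|\le|x_{n_k}|$ with $x_{n_k}\in A$, the sequence $(z_k)$ lies in $\sol{A}$ and so is weakly null by disjoint weak compactness; and because $(y_k)$ is almost order bounded, writing $y_k=a_k+b_k$ with $a_k$ in a fixed order interval and $\sup_k\|b_k\|$ as small as desired, the weak precompactness of order intervals from (b) together with the stability of weak precompactness under uniformly small norm perturbations furnishes a weakly Cauchy subsequence of $(y_k)$. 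Since the sum of a weakly Cauchy sequence and a weakly null one is again weakly Cauchy, the corresponding subsequence of $(x_{n_k})$ is weakly Cauchy, proving $A$ weakly precompact.

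The only real obstacle is the splitting step, namely producing the disjoint sequence $(z_k)\subseteq\sol{A}$ together with an almost order bounded remainder $y_k=x_{n_k}-z_k$. This is a Kadec--Pe\l{}czy\'{n}ski-type disjointification: if no subsequence of $(x_n)$ were almost order bounded, one extracts along a subsequence a disjoint sequence $(z_k)$ with $|z_k|\le|x_{n_k}|$ and $\inf_k\|z_k\|>0$ absorbing the mass that escapes every order interval, leaving an almost order bounded remainder. I would carry this out with the disjoint-sequence extraction machinery developed in \cite{disjointly}; indeed, since $(b)\Leftrightarrow(c)$ is a purely lattice-theoretic equivalence that makes no reference to $p$, it may simply be quoted from \cite{disjointly}. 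The delicate point to verify is that the remainder can genuinely be arranged to be almost order bounded---equivalently, via Rosenthal's $\ell_1$-theorem, that an $\ell_1$-sequence in $\sol{A}$ must occur either inside a single order interval, which (b) forbids, or along a disjoint sequence, which disjoint weak compactness forbids since disjoint sequences in $\sol{A}$ are weakly null and hence contain no $\ell_1$-subsequence.
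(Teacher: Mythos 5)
Your proposal matches the paper's proof: the implications (a)$\Rightarrow$(b) and (c)$\Rightarrow$(a) are handled exactly as you do, via \cite[Remark 3.3]{galmirams} and the chain almost $p$-limited $\Rightarrow$ disjointly weakly compact for $p\ge 2$, and for (b)$\Leftrightarrow$(c) the paper does precisely what you offer as your fallback, namely citing \cite[Corollary 3.11]{disjointly} rather than reproving it. Your sketched Kadec--Pe\l{}czy\'{n}ski-style argument for (b)$\Rightarrow$(c) is extra work the paper does not attempt (and its splitting step would need genuine justification), but since you ultimately defer to the citation, the proposal is correct and takes essentially the same route.
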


\begin{proof}
    (a)$\Rightarrow$(b) This follows from the fact that order intervals are always almost $p$-limited sets proven in \cite[Remark 3.3]{galmirams}. 

    (b)$\Leftrightarrow$(c) This equivalence is established in \cite[Corollary 3.11]{disjointly}.
    
    (c)$\Rightarrow$(a) This follows from the already observed fact that every almost $p$-limited set is disjointly weakly compact.
\end{proof}

	By Corollary \ref{example 2} and \cite[Lemma 3.12]{disjointly}, whenever $p \geq 2$, a $\sigma$-Dedekind complete Banach lattice $E$ has order continuous norm if and only if every almost $p$-limited set in $E$ is weakly precompact.
It is important to notice that Lemma \ref{example 3}(2), Theorem \ref{188}, Corollary \ref{1i88}, Corollary \ref{1ih88}, Corollary \ref{Sgj}, and Corollary \ref{example 2} hold for $p \geq 2$, because they all depend on \cite[Proposition 3.8]{galmirams}. %%%Our next result also depends on a result from \cite{galmirams} that holds only for $p \geq 2$.

In \cite{delpin2}, Delgado and Piñeiro established important relations between the class of $p$-summing operators and the class of $p$-limited. Similar relations can be presented to the classes of disjoint/positive $p$-summing operators and the classes of almost/positive $p$-limited sets.  Following \cite{blasco} (resp. \cite{chenbelacel}), a bounded linear operator $T: E \to Y$ is said to be \textit{positive $p$-summing} (resp. \textit{disjoint $p$-summing}) if there exists a constant $C > 0$ such that for all  positive elements (resp. pairwise disjoint elements) $x_1, \dots, x_k \in E$, we have
$$ \left ( \sum_{i=1}^k \norma{Tx_i}^p \right )^{1/p} \leq C \sup \conj{\left ( \sum_{i=1}^\infty |x'(x_i)|^p \right )^{1/p}}{ x' \in B_{E'} }. $$
We refer to \cite{blasco} and \cite{chenbelacel} for the respective definitions and some results concerning these two classes of operators. %% The class of positive $p$-summing operators and the class of disjoint $p$-summing operators from $E$ into $Y$ are denoted by $\Lambda_p(E, F)$ and $\Gamma_p(E, F)$, respectively.

The following result is standard and it establishes important relations between the classes mentioned above.

\begin{proposition} \label{psum} For a bounded linear operator $T: E \to X$, the following are equivalent:\\
{\rm (a)} $T$ is positive $p$-summing (resp. disjoint $p$-summing). \\
{\rm (b)} $T$ maps positive weakly $p$-summing (resp. disjoint weakly $p$-summing) sequences in $E$ to norm $p$-summable sequences in $F$. \\
{\rm (c)} $T^*(B_{X^*})$ is a positive $p$-limited (resp. an almost $p$-limited) subset of $E^*$.
\end{proposition}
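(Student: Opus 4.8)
The plan is to prove the two stated cases (positive and disjoint) in parallel, reading a ``positive (resp.\ disjoint) weakly $p$-summable sequence'' as a weakly $p$-summable sequence whose terms are positive (resp.\ pairwise disjoint), and abbreviating $\|(x_n)\|_{w,p} := \sup_{x'\in B_{E'}}\left(\sum_n |x'(x_n)|^p\right)^{1/p}$. I would establish (a)$\Rightarrow$(b)$\Rightarrow$(a) and then (b)$\Leftrightarrow$(c). The implication (a)$\Rightarrow$(b) is the routine passage from finite to infinite families: given such a sequence $(x_n)_n$, each initial segment $x_1,\dots,x_k$ is again a finite positive (resp.\ pairwise disjoint) family, so the defining inequality gives $\sum_{n=1}^k \|Tx_n\|^p \le C^p \|(x_n)\|_{w,p}^p$ uniformly in $k$; letting $k\to\infty$ yields $(Tx_n)_n\in\ell_p(X)$.

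For (b)$\Rightarrow$(a) I would argue by contradiction with a gliding-hump construction. If (a) fails, then for each $m\in\N$ there is a finite positive (resp.\ disjoint) family $(x_i^m)_{i\le k_m}$, normalized so that $\|(x_i^m)_i\|_{w,p}=1$ and $\sum_i\|Tx_i^m\|^p>8^m$. Scaling the $m$-th family by $\lambda_m=4^{-m/p}$ and concatenating produces a single sequence $(z_j)_j$ with $\|(z_j)\|_{w,p}^p\le\sum_m\lambda_m^p=\sum_m 4^{-m}<\infty$, hence weakly $p$-summable, while $\sum_j\|Tz_j\|^p\ge\sum_m\lambda_m^p\,8^m=\sum_m 2^m=\infty$, contradicting (b). In the positive case each $z_j$ is positive and the argument is complete; equivalently one may phrase it as a closed-graph argument for the additive, positively homogeneous map $(x_n)\mapsto(Tx_n)$ on the closed cone of positive sequences inside the Banach space $\ell_p^w(E)$ of weakly $p$-summable sequences (coordinatewise norm convergence is controlled by $\|\cdot\|_{w,p}$, so the graph is closed and Baire category forces the uniform constant).

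The main obstacle is the disjoint case of (b)$\Rightarrow$(a): the concatenated sequence $(z_j)_j$ is pairwise disjoint \emph{inside} each block but not across blocks, so it need not be a disjoint sequence, and neither (b) nor a cone-based closed-graph theorem applies directly (disjoint sequences form a closed set but not a cone, since a sum of two disjoint sequences is generally not disjoint). I would resolve this by reducing the disjoint estimate to positive families: writing each $x_i$ as $x_i^+-x_i^-$, the enlarged family $\{x_i^{+},x_i^{-}\}$ is still pairwise disjoint and positive, $\sum_i\|Tx_i\|^p\le 2^{p-1}\sum_i(\|Tx_i^{+}\|^p+\|Tx_i^{-}\|^p)$, and, using $|x'(x_i^{\pm})|\le|x'|(x_i^{\pm})$ together with $a^p+b^p\le(a+b)^p$ and a simultaneous sign-rectifying functional over the pairwise disjoint $x_i$ (a multi-term version of \cite[Theorem 1.23]{Positive}, available precisely because the $x_i$ are disjoint), one checks $\|(x_i^{\pm})\|_{w,p}\le\|(x_i)\|_{w,p}$. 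This transfers the bound obtained on positive families to disjoint ones; alternatively one routes the disjoint case through (c) via the operator characterization of almost $p$-limited sets, where the relevant disjoint operators into $\ell_p$ already carry the uniform constant.

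Finally, (b)$\Leftrightarrow$(c) is essentially a rewriting through the identity $\sup_{x^*\in T^*(B_{X^*})}|x^*(x)|=\sup_{y^*\in B_{X^*}}|y^*(Tx)|=\|Tx\|$, valid for every $x\in E$. Testing whether $T^*(B_{X^*})\subset E^*$ is almost (resp.\ positive) $p$-limited against a disjoint (resp.\ positive) weak$^*$ $p$-summable test sequence drawn from the predual $E$, the defining majorization ``there is $(a_n)\in\ell_p$ with $\sup_{x^*\in T^*(B_{X^*})}|x^*(x_n)|\le a_n$'' becomes exactly ``$(\|Tx_n\|)_n$ is dominated in $\ell_p$,'' i.e.\ condition (b). The one delicate point is matching the class of test sequences: a sequence of positive (resp.\ pairwise disjoint) elements of $E$ is weakly $p$-summable if and only if it is weak$^*$ $p$-summable when viewed in $E\hookrightarrow E^{**}$, and the canonical embedding is a lattice homomorphism, so positivity and disjointness are preserved; for the full bidual form of (c) one additionally uses that $T^{**}$ inherits the inequality from (a) by weak$^*$ density, together with the fact (see \cite{zeefou}) that a disjoint weak$^*$ $p$-summable sequence is weakly $p$-summable, which closes the equivalence.
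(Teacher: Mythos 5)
The paper does not actually prove Proposition \ref{psum}; it is stated as ``standard,'' with the substance residing in \cite{blasco} and \cite{chenbelacel}. Judged on its own terms, your proposal is sound in the positive branch ((a)$\Rightarrow$(b) by initial segments, (b)$\Rightarrow$(a) by the gliding hump, which is legitimate there because concatenations and positive scalings of positive families are again positive) and in the reduction of (c) to (b) for test sequences coming from $E$, via $\sup_{y^*\in B_{X^*}}|T^*y^*(x)|=\|Tx\|$. But it has two genuine gaps.

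First, the disjoint case of (b)$\Rightarrow$(a): you correctly flag the cross-block disjointness obstacle, but your fix does not remove it. The splitting $x_i=x_i^+-x_i^-$ together with the (correct) estimate $\|(x_i^{\pm})\|_{w,p}\le\|(x_i)\|_{w,p}$ for disjoint families proves that the disjoint $p$-summing inequality follows from the inequality on \emph{positive} families, i.e.\ it proves ``positive $p$-summing $\Rightarrow$ disjoint $p$-summing.'' In the disjoint branch, however, your hypothesis is (b) only for \emph{disjoint} weakly $p$-summable sequences; the gliding hump applied to the positive disjoint blocks still yields a concatenation that is positive but not pairwise disjoint, so (b)-disjoint cannot be applied to it. Your argument silently upgrades the hypothesis to (b)-positive, which is not available in that branch, and the alternative route you mention (through disjoint operators into $\ell_p$) faces exactly the same uniformity problem and is not developed. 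Second, condition (c) tests $T^*(B_{X^*})$ against positive (resp.\ disjoint) weak$^*$ $p$-summable sequences in $E^{**}$, not only those coming from $E$, and this is the form the paper actually uses later (in Theorem \ref{ppsum}). Passing to general elements of $E^{**}$ amounts to showing that $T^{**}$ is again positive (resp.\ disjoint) $p$-summing, and ``weak$^*$ density'' alone does not deliver this: both $\|T^{**}\varphi\|$ and $\sup_{x'\in B_{E'}}\bigl(\sum_i|\varphi_i(x')|^p\bigr)^{1/p}$ are merely weak$^*$-lower semicontinuous, and the semicontinuity of the right-hand side points in the wrong direction. What is needed here is a local-reflexivity argument adapted to Banach lattices (this is where \cite{chenbelacel} does real work), and the auxiliary fact you attribute to \cite{zeefou} is not what that reference establishes.
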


We also have a dual of equivalence (a)$\Leftrightarrow$(c) in Propositon \ref{psum}:

\begin{proposition} \label{psum2}
The adjoint operator $T^*: E^* \to X^*$ is positive $p$-summing (resp. disjoint $p$-summing) if and only if  $T(B_X)$ is positive $p$-limited (resp. almost $p$-limited) subset of $E$. 
\end{proposition}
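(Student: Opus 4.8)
The plan is to prove this as a dual companion to Proposition \ref{psum}, leveraging the characterization of almost/positive $p$-limited sets via disjoint/positive operators into $\ell_p$ together with the standard fact that an operator is $p$-summing precisely when its image of the unit ball (of the dual) is $p$-limited. The cleanest route is to reduce everything to the scalar definition of disjoint/positive $p$-summing by testing against disjoint (resp. positive) weakly $p$-summable sequences and using adjoint identities $T^{**}x = Tx$ under the canonical embedding.

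First I would prove the disjoint case; the positive case is identical after replacing ``disjoint'' by ``positive'' throughout. Assume $T^*: E^* \to X^*$ is disjoint $p$-summing. To show $T(B_X) \subset E$ is almost $p$-limited, by the characterization recalled in the introduction it suffices to verify that $S(T(B_X))$ is order bounded in $\ell_p$ for every disjoint operator $S: E \to \ell_p$. Such an $S$ is given by a disjoint weak$^*$ $p$-summable sequence $(x_n^*)_n \subset E^*$ via $S(x) = (x_n^*(x))_n$. The disjointness of $(x_n^*)_n$ means precisely that these are pairwise disjoint elements of $E^*$, so I can feed finite subfamilies into the disjoint $p$-summing inequality for $T^*$. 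Concretely, for each $x \in B_X$ and each $n$, one has $x_n^*(Tx) = (T^*x_n^*)(x)$, and the defining inequality for $T^*$ bounds $\big(\sum_n \|T^*x_n^*\|^p\big)^{1/p}$ by $C$ times the weak$^*$ $p$-summing norm of $(x_n^*)_n$, which is finite. Since $\sup_{x \in B_X}|x_n^*(Tx)| \le \|T^*x_n^*\|$, the sequence $a_n := \|T^*x_n^*\|$ lies in $\ell_p$ and dominates $|(Sy)_n|$ for all $y \in T(B_X)$, giving order boundedness.

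For the converse, suppose $T(B_X)$ is almost $p$-limited and I must show $T^*$ is disjoint $p$-summing. By Proposition \ref{psum} applied to $T^*$ (equivalence (a)$\Leftrightarrow$(c)), it is enough to check that $T^{**}(B_{X^{**}})$ is an almost $p$-limited subset of $E^{**}$. Here I would use that $T^{**}$ extends $T$ under the canonical embeddings $X \hookrightarrow X^{**}$, $E \hookrightarrow E^{**}$, so that $T^{**}(B_{X^{**}})$ is the weak$^*$ closure of $T(B_X)$ inside $E^{**}$. The point is that almost $p$-limitedness is tested against disjoint weak$^*$ $p$-summable sequences in $E^{***}$, which restrict to such sequences in $E^*$; since a weak$^*$ convergent net cannot increase the supremum $\sup|x_n^{***}(\cdot)|$ beyond its value on the original set, the $\ell_p$-domination passing from $T(B_X)$ transfers to its weak$^*$ closure.

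The main obstacle I anticipate is this last transfer step: making precise that passing from $T(B_X)$ to $T^{**}(B_{X^{**}})$ (equivalently, to the weak$^*$ closure) preserves almost $p$-limitedness, and that the disjoint test sequences in $E^{***}$ interact correctly with the disjointness structure of the lattice $E^*$. Rather than wrestle with third duals, the more robust approach is to work directly with the scalar disjoint $p$-summing inequality for $T^*$: given pairwise disjoint $x_1^*, \dots, x_k^* \in E^*$, estimate $\sum_i \|T^*x_i^*\|^p = \sum_i \sup_{x \in B_X}|x_i^*(Tx)|^p = \sum_i \big(\rho_{T(B_X)}(x_i^*)\big)^p$ and dominate this by the almost $p$-limited constant of $T(B_X)$ times the weak$^*$ $p$-summing norm of the (finite, hence trivially weak$^*$ $p$-summable) family $(x_i^*)_i$. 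This avoids the bidual entirely and is the step I would commit to, since it turns the whole proposition into a direct manipulation of the two defining inequalities, with the only care being the identity $\sup_{x \in B_X}|x^*(Tx)| = \|T^*x^*\|$ and the observation that the supremum functional $\rho_{T(B_X)}$ coincides with $\|T^*(\cdot)\|$ on $E^*$.
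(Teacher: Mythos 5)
Your final committed argument --- reducing both directions to the identity $\sup_{x\in B_X}|x^*(Tx)|=\|T^*x^*\|$ and reading the disjoint (resp.\ positive) $p$-summing inequality for $T^*$ against disjoint (resp.\ positive) test sequences --- is exactly the computation in the paper's proof, which runs both implications through $\sum_n\bigl(\sup_{x\in B_X}|x_n^*(Tx)|\bigr)^p=\sum_n\|T^*x_n^*\|^p$. You were right to discard the detour through $T^{**}$ and third duals; it is unnecessary, and the paper avoids it entirely.
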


\begin{proof}    Assume that $T^*: E^* \to X^*$ is positive $p$-summing and let $(x_n^*)_n$ be a positive weakly $p$-summable sequence in $E^*$. Thus,     $$ \sum_{n=1}^\infty(\sup_{x \in B_X} |x_n^*(Tx)|)^p = \sum_{n=1}^\infty (\sup_{x \in B_X} |(T^*x_n^*)(x)|)^p = \norma{T^*x_n^*}^p < \infty, $$   which proves that $T(B_X)$ is a positive $p$-limited set in $E$. The converse holds tracing back the above argument.\end{proof}

\begin{remarks}
    (1) If $E$ is an $AL$-space, then $E^*$ is an $AM$-space with an order unit, and so $B_{E^*}$ is both a positive $p$-limited set and an almost $p$-limited set (see \cite{galmirams}). By Theorem \ref{psum}, we conclude that the identity operator $I_E$ is both a positive $p$-summing operator and a disjoint $p$-summing operator. This yields that every positive (resp. disjoint) weakly $p$-summable sequence in $E$ is norm $p$-summable, which is an interesting contrast with the weak Dvoretzky-Rogers Theorem \cite[2.18]{diestel}.  \\
    (2) It is a well-known fact that every $p$-summing operator between two Banach spaces is weakly compact. This is not true for disjoint $p$-summing or positive $p$-summing operators. In fact, as we have already pointed out the identity operator $Id_{\ell_1}$ is both disjoint $p$-summing and positive $p$-summing operator which is not weakly compact. Actually, we have that every disjoint $p$-summing 
operator is order weakly compact. Indeed, if $T: E \to F$ is disjoint $p$-summing and $(x_n)_n$ is an order bounded disjoint sequence in $E$, we have from \cite[Lemma 2.10]{strong limited} that $(x_n)_n$ is weakly $p$-summable, and hence $\norma{Tx_n} \to 0$, proving that $T$ is order weakly compact (see \cite[Theorem 5.57]{Positive}). \\
{\rm (3)} A version of \cite[Proposition 2.1(1)]{delpin2} can also be obtained: a norm bounded subset $A$ of a Banach lattice $E$ is disjoint $p$-summing (resp. positive $p$-summing) if and only if $U_A^*: E^* \to \ell_\infty(A)$ is disjoint $p$-summing (resp. positive $p$-summing).
\end{remarks}

It is well known that $p$-summing operators are weakly compact. We conclude this manuscript with a result that addresses this problem:

\begin{theorem} \label{ppsum} 
{\rm (1)} A Banach lattice $E$ is a KB-space if and only if, for every Banach space $X$, the bounded linear operator $T: X \to E$ is weakly compact, whenever $T^*$ is disjoint $p$-summing or positive $p$-summing. \\
{\rm (2)} The dual $E^*$, of a Banach lattice $E$, is a KB-space if and only if every disjoint $p$-summing (resp. positive $p$-summing) operator from $E$ into any Banach space is weakly compact. \\
{\rm (3)} A Banach lattice $E$ has the strong $p$-GP property if and only if, for every Banach space $X$, the bounded linear operator $T: X \to E$ is compact, whenever $T^*$ is disjoint $p$-summing. For $p\geq 2$, we may also consider positive $p$-summing operators by Corollary \ref{corolario}.\\
{\rm (4)}  The dual $E^*$, of a Banach lattice $E$, has the strong $p$-GP property if and only if every disjoint $p$-summing operator from $E$ into any Banach space is weakly compact. For $p\geq 2$, we may also consider positive $p$-summing operators by Corollary \ref{corolario}.
\end{theorem}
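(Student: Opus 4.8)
The plan is to prove all four parts by combining the operator-theoretic characterizations of almost/positive $p$-limited sets (Propositions \ref{psum} and \ref{psum2}) with the set-theoretic characterizations established earlier, namely Theorem \ref{r2.3u} for the KB-space parts (1)--(2) and the strong $p$-GP characterization (Definition \ref{2.1} together with Corollary \ref{corolario}) for the compactness parts (3)--(4). The unifying idea is that weak compactness of an operator $T:X\to E$ is equivalent to relative weak compactness of $T(B_X)$, and compactness of $T$ is equivalent to relative compactness of $T(B_X)$; so each statement becomes a translation of a property about operators into the corresponding property about the set $T(B_X)$.

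For part (1), I would first use Proposition \ref{psum2}, which says that $T^*$ being disjoint $p$-summing (resp. positive $p$-summing) is equivalent to $T(B_X)$ being an almost $p$-limited (resp. positive $p$-limited) subset of $E$. Then the assertion ``$T$ is weakly compact whenever $T^*$ is disjoint/positive $p$-summing, for every $X$'' becomes exactly ``every almost $p$-limited (resp. positive $p$-limited) set $T(B_X)$ is relatively weakly compact''. For the forward direction, assuming $E$ is a KB-space, Theorem \ref{r2.3u}(a)/(b) gives that every almost/positive $p$-limited set is relatively weakly compact, and applying this to $T(B_X)$ yields that $T$ is weakly compact. For the converse, if every such $T$ is weakly compact, then in particular every almost/positive $p$-limited set of the form $T(B_X)$ is relatively weakly compact; since every almost $p$-limited set $A\subset E$ arises this way (for instance via the canonical evaluation operator $U_A:\ell_1(A)\to E$, or simply by noting that an arbitrary almost $p$-limited set can be realized as the image of a ball), Theorem \ref{r2.3u} forces $E$ to be a KB-space. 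Part (3) follows the same template but uses Proposition \ref{psum2} only in the disjoint case, replaces ``relatively weakly compact'' by ``relatively compact'', and invokes Definition \ref{2.1} (strong $p$-GP = every almost $p$-limited set is relatively compact); the added remark about positive $p$-summing operators for $p\geq 2$ is justified by Corollary \ref{corolario}, which equates the strong $p$-GP property with relative compactness of positive $p$-limited sets in that range.

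For parts (2) and (4), the same strategy applies but to the dual operator, using Proposition \ref{psum} in place of Proposition \ref{psum2}. Proposition \ref{psum}(a)$\Leftrightarrow$(c) states that $T:E\to X$ is disjoint/positive $p$-summing if and only if $T^*(B_{X^*})$ is an almost/positive $p$-limited subset of $E^*$. Thus ``every disjoint/positive $p$-summing operator $T:E\to X$ is weakly compact'' translates, via $T$ weakly compact $\Leftrightarrow$ $T^*$ weakly compact $\Leftrightarrow$ $T^*(B_{X^*})$ relatively weakly compact, into ``every almost/positive $p$-limited subset of $E^*$ is relatively weakly compact'', which by Theorem \ref{r2.3u} (applied to the Banach lattice $E^*$) is equivalent to $E^*$ being a KB-space. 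Part (4) is identical with ``weakly compact'' upgraded to ``compact'' and Theorem \ref{r2.3u} replaced by the strong $p$-GP characterization for $E^*$.

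The main obstacle I anticipate is the converse directions, specifically verifying that an arbitrary almost/positive $p$-limited set (in $E$ for parts (1),(3), or in $E^*$ for parts (2),(4)) genuinely arises as $T(B_X)$ or $T^*(B_{X^*})$ for some operator $T$ of the required type, so that the operator hypothesis can be specialized to recover the full set-theoretic statement. The cleanest way around this is to use the canonical operator associated to a set: for an almost $p$-limited set $A$, consider the map $U_A$ into $E$ (or the adjoint realization into $E^*$) whose image is, up to closed absolutely convex hull, the set $A$; since relative (weak) compactness is stable under passing to the closed absolutely convex hull in the weakly compact case (by Krein's theorem) and under closed convex hulls of compact sets in the norm case (by Mazur's theorem), one recovers relative (weak) compactness of $A$ itself. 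I would also need to be slightly careful that the weak compactness of $T$ and of $T^*$ coincide (Gantmacher's theorem) when moving between parts (2)/(4) and their set formulations. Once these realization and stability points are in place, each equivalence is a direct substitution into the already-established Theorem \ref{r2.3u} or the strong $p$-GP characterization, so no further hard analysis is required.
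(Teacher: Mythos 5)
Your proposal is correct and follows essentially the same route as the paper: forward directions via Propositions \ref{psum}/\ref{psum2} combined with Theorem \ref{r2.3u} (or the strong $p$-GP characterization), and converses by realizing an almost/positive $p$-limited set as the image of a ball under a canonical $\ell_1$-type operator, with Gantmacher's theorem handling the dual cases. The only cosmetic difference is that the paper runs the converse through a sequence $(x_n)_n\subset A$ and the operator $T:\ell_1\to E$, $T(a)=\sum_i a_i x_i$, whereas you use $U_A$ on $\ell_1(A)$; note that your appeal to Krein/Mazur is unnecessary, since $A\subseteq U_A(B_{\ell_1(A)})$ already gives relative (weak) compactness of $A$ directly.
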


\begin{proof} 
(1) Suppose first that $E$ is a KB-space. If $T: X \to E$ is a bounded linear operator whose adjoint $T^*: E^* \to F^*$ is disjoint $p$-summing (resp. positive $p$-summing), then $T(B_X)$ is an almost $p$-limited (resp. positive $p$-summing) subset of $E$ by Proposition \ref{psum2}. Since $E$ is a KB-space, we have from Theorem \ref{r2.3u} that $T(B_X)$ is a relatively weakly compact subset of $E$, proving that $T$ is weakly compact. For the converse, we prove that every almost $p$-limited subset $A$ of $E$ is relatively weakly compact. In fact, 
let $(x_n)_n$ be a sequence in $A$ and defining $T:
	\ell_{1}\rightarrow E $ by
	$T(a)=\displaystyle \sum_{i=1}^\infty a_i x_i$ for every $a = (a_i)_i \in \ell_1$. We claim that the adjoint operator 
    $T^*:
	 E^* \rightarrow \ell_{\infty}$, that is given by $T^*(x^*)=(x^*(x_i) )_i$ for every $x^* \in X^*$, is disjoint $p$-summing. Indeed, if $(x_n^*)_n$ is a disjoint weakly $p$-summable sequence in $E^*$, we have
     $(\|T^*x_n^*\|)_n=(\sup_{i} |x_{n}^*(x_i)|)_n\in \ell_p$, because $(x_n)_n$ is an almost $p$-limited sequence. Therefore, $T^*$ is disjoint $p$-summing, and then $T$ is weakly compact. Thus, $(T (e_n^*))_n = (x_n)_n$ has a weakly convergent subsequence, which proves that $A$ is relatively weakly compact. By Theorem \ref{r2.3u}, we conclude that $E$ is a KB-space.

(2) Assume that $E^*$ is a KB-space. Let $X$ be a Banach space and let $T : E \to X$ be a disjoint $p$-summing (resp. positive $p$-summing) operator. By Proposition \ref{psum}, $T^* (B_{X^*})$ is an almost $p$-limited (positive $p$-limited) set in $E^*$, and by Theorem Theorem \ref{r2.3u}, we get that $T^* (B_{X^*})$ is relatively weakly compact (resp. relatively compact), which implies that $T^*$, and so $T$, is weakly compact. For the converse, let $A$ be an almost $p$-limited set in $E^*$ and let $(x_n^*)_n$ be a sequence in $A$. It is not difficult to see that the operator $T:E \rightarrow \ell_{\infty}$ defined by $T(x)=(x_i^*(x) )_i$ is
disjoint $p$-summing, and so the assumption yields that $T$ is weakly compact. By the Gantmacher`s theorem, $T^*$ is also a weakly compact operator, which yields that $(T ^*(e_n^*))_n = (x_n^*)_n$ has a
	weakly convergent subsequence. This proves that every almost $p$-limited subset of $E^*$ is relatively weakly compact, and hence $E^*$ is a KB-space.

The items (3) and (4) follow from the same arguments used to prove (1) and (2) by making the necessary changes.
\end{proof}

\bibliographystyle{amsplain}

\end{document}